\newcommand{\R}{\mathbb{R}}
\newcommand{\E}{\mathbb{E}}
\newcommand{\ddr}{\mathrm{d}}
\renewcommand{\P}{\mathbb{P}}
\newcommand{\edr}{\mathrm{e}}
\def\E{\mathbb{E}}
\DeclareMathOperator{\Var}{Var}
\DeclareMathOperator{\Beta}{Beta}
\DeclareMathOperator{\Dir}{Dir}
\DeclareMathOperator{\Bern}{Bern}
\def\Rcode{The \textsf{R} code for the plots presented in this note and for a function deriving the optimal proxy variance in terms of $\alpha$ and $\beta$ is available at \href{http://www.julyanarbel.com/software}{\textsf{http://www.julyanarbel.com/software}}.}
\theoremstyle{plain}
\newtheorem{thm}{{Theorem}}
\newtheorem{defi}{{Definition}}
\newtheorem{cor}{{Corollary}}
\newtheorem{rem}{{Remark}}
\title{On the sub-Gaussianity of the Beta and Dirichlet distributions}
\author{Olivier Marchal$^1$ and Julyan Arbel$^2$\vspace{5mm}\\
$^1$  Universit\'{e} de Lyon, CNRS UMR 5208, Universit\'{e} Jean Monnet,
\\
Institut Camille Jordan, France\vspace{5mm}\\
$^2$  Inria Grenoble Rh\^one-Alpes,\\
Laboratoire Jean Kuntzmann, Universit\'e Grenoble Alpes, France
}
\begin{document}

\maketitle

\begin{abstract}
We obtain the optimal proxy variance for the sub-Gaussianity of Beta distribution, thus proving upper bounds recently conjectured by Elder (2016). We provide different proof techniques for the symmetrical (around its mean) case and the non-symmetrical case. The technique in the latter case relies on studying the ordinary differential equation satisfied by the Beta moment-generating function known as the  confluent hypergeometric function. As a consequence, we derive the optimal proxy variance for the Dirichlet distribution, which is apparently a novel result. We also provide a new proof of the optimal proxy variance for the Bernoulli distribution, and discuss in this context the proxy variance relation to log-Sobolev inequalities and transport inequalities. 
\end{abstract}

\section{Introduction}

The sub-Gaussian property \citep{buldygin1980sub,buldygin2000metric,pisier2016subgaussian} and related concentration inequalities \citep{boucheron2013concentration,raginsky2013concentration} have attracted a lot of attention in the last couple of decades due to their applications in various areas such as pure mathematics, physics, information theory and computer sciences. Recent interest focused on deriving the optimal proxy variance for discrete random variables like the Bernoulli distribution \citep{buldygin2000binary,kearns1998large,berend2013concentration} and the missing mass \citep{mcallester2000convergence,mcallester2003concentration, berend2013concentration,ben2017concentration}.
Our focus is instead on two continuous random variables, the Beta and Dirichlet distributions, for which the optimal proxy variance was not known to the best of our knowledge. Some upper bounds were recently conjectured by \cite{elder2016bayesian} that we prove in the present article by providing the optimal proxy variance for both Beta and Dirichlet distributions. Similar concentration properties of the Beta distribution have been recently used in many contexts including  Bayesian adaptive data analysis \citep{elder2016bayesian},  Bayesian nonparametrics \citep{castillo2016polya} and spectral properties of random matrices \citep{perry2016statistical}.


We start by reminding the definition of sub-Gaussian property for random variables:

\begin{defi}[Sub-Gaussian variables]\label{Def1}
A random variable $X$ with finite mean $\mu=\E[X]$ is {sub-Gaussian} if there is a positive number $\sigma$ such that:
\begin{align}\label{eq:def}
\E[\exp(\lambda (X-\mu))]\le\exp\left(\frac{\lambda^2\sigma^2}{2}\right)\,\,\text{for all } \lambda\in\R.
\end{align}
Such a constant $\sigma^2$ is called a {proxy variance} (or sub-Gaussian norm), and we say that {$X$ is $\sigma^2$-sub-Gaussian}. If $X$ is sub-Gaussian, one is usually interested in {the optimal proxy variance}:
\beqq \sigma_{\text{opt}}^2(X)=\min\{\sigma^2\geq 0\text{ such that } X \text{ is } \sigma^2\text{-sub-Gaussian}\}.\eeqq
Note that the variance always gives a lower bound on the optimal proxy variance: $\text{Var}[X]\leq \sigma_{\text{opt}}^2(X)$. In particular, when $\sigma_{\text{opt}}^2(X)=\text{Var}[X]$, $X$ is said to be {strictly sub-Gaussian}.
\end{defi}
Every compactly supported distribution, as is the Beta$(\alpha,\beta)$ distribution, is sub-Gaussian. This can be seen by Hoeffding's classic inequality: any random variable $X$ supported on $[0,1]$ with mean $\mu$ satisfies
\begin{equation*}
	\forall \lambda\in\R,\quad \E\left[\edr^{\lambda(X-\mu)}\right]\leq \edr^{\frac{\lambda^2}{8}},
\end{equation*}
thus exhibiting $\frac{1}{4}$ as an upper bound to the proxy variance. This bound can be improved by taking into account the location of the mean $\mu$ within the interval $[0,1]$. An early step in this direction is the second inequality in \cite{hoeffding1963probability} paper, indexed (2.2). It states that if $\mu<1/2$, then for any positive $\epsilon$, $\P(X-\mu>\epsilon)\leq \edr^{-\epsilon^2g(\mu)}$, where 
\begin{equation}\label{eq:g}
g(\mu) = \frac{1}{1-2\mu}\ln\frac{1-\mu}{\mu}
\end{equation}
thus indicating that $X$ has a right tail lighter than a Gaussian tail of variance $\frac{1}{2g(\mu)}$. Hoeffding's result was strengthened by \cite{kearns1998large} to comply with Definition~\ref{Def1} of sub-Gaussianity\footnote{Note indeed that Equation~\eqref{eq:def}, together with Markov inequality, imply $\P(X-\mu>\epsilon)  \leq \edr^{-\frac{\epsilon^2}{2\sigma^2}}$.} as follows
\begin{align}\label{eq:kearns}
\E[\exp(\lambda (X-\mu))]\le\exp\left(\frac{\lambda^2}{4g(\mu)}\right)\,\,\text{for all } \lambda\in\R,
\end{align}
thus indicating that  $\frac{1}{2g(\mu)}$ is a distribution-sensitive proxy variance for any $[0,1]$-supported random variable with mean $\mu$ \citep[see also][for a detailed proof of this result]{berend2013concentration}. If this is the optimal proxy variance for the Bernoulli distribution \citep[see Theorem 2.1 and Theorem 3.1 of][]{buldygin2000binary}, it is clear from our result that it does not hold true for the Beta distribution. 
However, fixing $\frac{\alpha}{\alpha+\beta} = \mu$ and letting $\alpha\to0$, $\beta\to0$, the $\Beta(\alpha,\beta)$ distribution concentrates to the $\Bern(\mu)$ distribution, and we show that we recover the optimal proxy variance for the Bernoulli distribution (Theorem~\ref{thm:3}).

An interesting common feature between optimal proxy variances for the Bernoulli distribution:  $\frac{1}{2g(\mu)}$, and that of the Beta distribution derived later on, is that they deteriorate in  a similar fashion as the mean $\mu$ goes to 0 or 1, see for instance the left panel of Figure~\ref{fig:proxy}. We briefly present here classical proof techniques for sub-Gaussianity hinging on certain tools from functional analysis. We show how they apply in the Bernoulli setting, and let as an interesting open problem how our proof in the Beta distribution setting could be supplemented by these same functional analysis tools. 

Essentially two (related) functional inequalities allow one to derive a sub-Gaussian property: log-Sobolev inequalities, which date back to \cite{gross1975logarithmic}, and transport inequalities. The relation with the former inequalities is called Herbst's argument. It states that if a probability measure satisfies a  log-Sobolev inequality with some constant, then  it is sub-Gaussian with the same constant as a proxy variance\footnote{The implied predicate is actually stronger than sub-Gaussianity, but it is not useful for our purposes.}  \citep[see for instance][Section 2.3 and Proposition 2.3]{ledoux1999concentration}. The optimal constant in the log-Sobolev inequality satisfied by the Bernoulli distribution also produces its optimal proxy variance \citep[][Corollary 5.9]{ledoux1999concentration}.

The relation with transport inequalities is usually referred to as Marton's argument \citep[see for instance][Section 3.4]{raginsky2013concentration}. 
Define the Wasserstein distance between two probability measures $P$ and $Q$ on a space $\mathcal{X}$ by 
\begin{equation*}
W(P,Q) = \inf_{\pi\in\Pi(P,Q)}\int_{\mathcal{X}\times \mathcal{X}}d(x,y)\pi(\ddr x, \ddr y),
\end{equation*}
where $\Pi(P,Q)$ is the set of probability measures on $\mathcal{X}\times \mathcal{X}$ with fixed marginal distributions respectively $P$ and $Q$. The  Wasserstein distance  depends on some choice of a distance $d$ on $\mathcal{X}$. A probability measure $P$ is said to satisfy a transport inequality with constant $c$, if for any probability measure $Q$ dominated by $P$,
\begin{equation}\label{eq:transport}
W(P,Q) \leq \sqrt{2c D(Q||P)},
\end{equation}
where $D(Q||P)$ is the entropy, or Kullback--Leibler divergence, between $P$ and $Q$. The transport inequality~\eqref{eq:transport} is denoted by $\mathsf{T}(c)$. 

\cite{bobkov1999exponential} proved that $\mathsf{T}(c)$ implies $c$-sub-Gaussianity. See also Proposition 3.6 and Theorem 3.4.4 of \cite{raginsky2013concentration} for general results. 
Further developments in the discrete $\mathcal{X}$ setting are interesting for our purposes. Equip a discrete space $\mathcal{X}$ with the Hamming metric, $d(x,y) = \mathbb{1}_{\{x\neq y\}}$. The induced Wasserstein distance then reduces to the total variation distance, $W(P,Q) = \Vert P-Q\Vert_{\text{TV}}$. In that setting, \cite{ordentlich2005distribution} proved the distribution-sensitive transport inequality:
\begin{equation}\label{eq:discrete_transport}
\Vert P-Q\Vert_{\text{TV}} \leq \sqrt{\frac{1}{g(\mu_P)}D(Q||P)},
\end{equation}
where the function $g$ is defined in Equation~\eqref{eq:g} and the coefficient $\mu_P$ is called the balance coefficient of $P$, and is defined by $\mu_P = \underset{A\subset \mathcal{X}}\max\min \{P(A), 1-P(A)\}$. In particular, the Bernoulli balance coefficient is easily shown to coincide with its mean. Hence, applying the result  of \cite{bobkov1999exponential} to the $\mathsf{T}\left(\frac{1}{2g(\mu_P)}\right)$ transport inequality~\eqref{eq:discrete_transport} yields a distribution-sensitive proxy variance of $\frac{1}{2g(\mu)}$ for the Bernoulli with mean $\mu$. It is optimal, see for instance Theorem 3.4.6 of \cite{raginsky2013concentration}.  This viewpoint highlights the key role played by the balance coefficient in the non-uniformity of the optimal proxy variance for discrete distributions such as the Bernoulli. However, it is not clear how this argument would carry over to non discrete distributions such as the Beta distribution for explaining similar sensitivity to the mean. However, to quote \cite{raginsky2013concentration}, the general approach may not produce optimal concentration estimates, that often require case-by-case treatments. This is the route followed in this note for the Beta distribution.


The outline of the note is as follows. We introduce the Beta distribution and state the main result (Theorem~\ref{thm:1}) in Section~\ref{sec:notations_results}. We then prove our result 
 depending on whether $\alpha=\beta$ (Section~\ref{sec:equal}) or $\alpha\neq\beta$ (Section~\ref{sec:distinct}). In the first case, the proof is elementary and based on comparing the coefficients of the entire series representations of the functions of both sides of inequality~\eqref{eq:def}. However, it does not directly carry over to the second case, whose proof requires some finer analysis tool: the study of the ordinary differential equation (ODE) satisfied by the confluent hypergeometric function $_1F_1$. 
Although the second proof also covers the case $\alpha=\beta$ upon slight modifications, the independent proof for the symmetric case is kept owing to its simplicity. As a by-product, we derive the optimal proxy variance for the Bernoulli and the Dirichlet distributions in Section~\ref{sec:dir}. 
\Rcode

\section{Optimal proxy variance for the Beta distribution}
\subsection{Notations and main result\label{sec:notations_results}}

The $\Beta(\alpha,\beta)$ distribution, with $\alpha,\beta>0$, is characterized by a density on the segment $[0,1]$ given by:
\begin{align*}
f(x) = \frac{1}{B(\alpha,\beta)} x^{\alpha-1}(1-x)^{\beta-1},
\end{align*}
where $B(\alpha,\beta) = \int_0^\infty x^{\alpha-1}(1-x)^{\beta-1} \ddr x = \frac{\Gamma(\alpha)\Gamma(\beta)}{\Gamma(\alpha+\beta)}$ is the Beta function. The moment-generating function of a $\text{Beta}(\alpha,\beta)$ distribution is given by a confluent hypergeometric function (also known as Kummer's function):
\begin{align}\label{eq:HyperGeo}
\E[\exp(\lambda X)]\,= \,_1F_1(\alpha;\alpha+\beta;\lambda)=\sum_{j=0}^\infty\frac{\Gamma(\alpha+j)\Gamma(\alpha+\beta)}{(j!) \Gamma(\alpha)\Gamma(\alpha+\beta+j)}\lambda^j.
\end{align}
This is equivalent to say that the $j^{\text{th}}$ raw moment of a $\Beta(\alpha,\beta)$ random variable $X$ is given by:
\begin{align}\label{eq:RawMoments}
 \E[X^j] = \frac{(\alpha)_j}{(\alpha+\beta)_j},
\end{align}
where $(x)_j=x(x+1)\cdots(x+j-1)=\frac{\Gamma(x+j)}{\Gamma(x)}$ is the \textit{Pochhammer symbol}, also known in the literature as a \textit{rising factorial}. In particular, the mean and variance are given by:
\beqq \E[X]=\frac\alpha{\alpha+\beta},\quad\Var[X]=\frac{\alpha\beta}{(\alpha+\beta)^2(\alpha+\beta+1)}.\eeqq

The Beta distribution is ubiquitous in statistics. It plays a central role in the binomial model in Bayesian statistics where it is a conjugate prior distribution (the associated posterior distribution is also Beta): if $X\sim\text{Binomial}(\theta,N)$ and $\theta\sim\Beta(\alpha,\beta)$, then $\theta|X\sim\Beta(\alpha+X, \beta+N-X)$. It is also key to Bayesian nonparametrics where it embodies, among others, the distribution of the breaks in the stick-breaking representation of the Dirichlet process and the Pitman--Yor process; marginal distributions of Polya trees \citep{castillo2016polya}; 
the posterior distribution of discovery probabilities under a Bayesian nonparametrics model \citep{arbel2015discovery}. Our main result opens new research avenues for instance about asymptotic (frequentist)  assessments of these procedures.\bigskip

Our main result regarding the Beta distribution is the following:
\begin{thm}[Optimal proxy variance for the Beta distribution]\label{thm:1}
For any $\alpha,\beta>0$, the Beta distribution $\Beta(\alpha,\beta)$ is $\sigma_{\text{opt}}^2(\alpha,\beta)$-sub-Gaussian with optimal proxy variance $\sigma_{\text{opt}}^2(\alpha,\beta)$ given by:
\begin{equation}\label{eq:def_x0}
\left\{\begin{array}{ll}
        \sigma_{\text{opt}}^2(\alpha,\beta)=\frac{\alpha}{(\alpha+\beta)x_0}\left( \frac{_1F_1(\alpha+1;\alpha+\beta+1;x_0)}{_1F_1(\alpha;\alpha+\beta;x_0)}  -1\right)\cr
\quad\quad \text{ where }x_0 \text{ is the unique solution of the equation}\cr
		\ln(_1F_1(\alpha;\alpha+\beta;x_0))=\frac{\alpha x_0}{2(\alpha+\beta)}\left(1+\frac{_1F_1(\alpha+1;\alpha+\beta+1;x_0)}{_1F_1(\alpha;\alpha+\beta;x_0)}\right).
    \end{array}
\right.  
\end{equation}
A simple and explicit upper bound to $\sigma_{\text{opt}}^2(\alpha,\beta)$
is given by $\sigma_0^2(\alpha,\beta) = \frac{1}{4(\alpha+\beta+1)}$:\\
- for $\alpha\neq \beta$ we have $\text{\normalfont{Var}}[\Beta(\alpha,\beta)]<\sigma_{\text{opt}}^2(\alpha,\beta)<\frac{1}{4(\alpha+\beta+1)}$\\
- for $\alpha=\beta$ we have $\text{\normalfont{Var}}[\Beta(\alpha,\alpha)] = \sigma_{\text{opt}}^2(\alpha,\alpha)=\frac{1}{4(2\alpha+1)}$. 
\end{thm}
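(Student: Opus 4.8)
The plan is to recast the sub-Gaussian requirement as a one-dimensional optimization and read off the optimizer. Writing $\mu=\frac{\alpha}{\alpha+\beta}$ and letting $K(\lambda)=\ln {}_1F_1(\alpha;\alpha+\beta;\lambda)-\mu\lambda$ be the centered cumulant generating function, inequality~\eqref{eq:def} holds for a given $\sigma^2$ if and only if $K(\lambda)\le \frac{\sigma^2\lambda^2}{2}$ for all $\lambda$, so that $\sigma_{\text{opt}}^2(\alpha,\beta)=\sup_{\lambda\neq0}\psi(\lambda)$ with $\psi(\lambda):=\frac{2K(\lambda)}{\lambda^2}$. Using Kummer's derivative identity $\frac{\ddr}{\ddr\lambda}{}_1F_1(a;b;\lambda)=\frac{a}{b}{}_1F_1(a+1;b+1;\lambda)$, the stationarity condition $\psi'(x_0)=0$ is exactly $x_0K'(x_0)=2K(x_0)$, which rearranges into the second displayed equation of the theorem, while $\sigma_{\text{opt}}^2=\psi(x_0)=K'(x_0)/x_0$ gives the first. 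Since $X\in[0,1]$ forces $K(\lambda)\le(1-\mu)\lambda$ for $\lambda>0$ and $K(\lambda)\le-\mu\lambda$ for $\lambda<0$, we have $\psi(\lambda)\to0$ as $\lambda\to\pm\infty$, whereas $\psi$ extends continuously to $0$ with $\psi(0)=\Var[X]$; hence the supremum is attained at a finite point, and the whole content reduces to locating it, proving its uniqueness, and bounding $\psi$ there.

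For the symmetric case $\alpha=\beta$ I would give an elementary argument. Then $Y=X-\tfrac12$ is symmetric, so the centered MGF is even and $\E[\edr^{\lambda Y}]=\sum_{k\ge0}\frac{\lambda^{2k}}{(2k)!}\E[Y^{2k}]$, while $\edr^{\lambda^2 v/2}=\sum_{k\ge0}\frac{v^k}{2^kk!}\lambda^{2k}$ with $v=\Var[X]=\frac{1}{4(2\alpha+1)}$. It therefore suffices to prove the term-by-term domination $\E[Y^{2k}]\le(2k-1)!!\,v^k$ for every $k$. Computing the even moments via the Beta integral gives $\E[Y^{2k}]=\frac{(2k)!}{16^kk!\,(\alpha+\frac12)_k}$, and the claimed inequality reduces to the transparent $(2\alpha+1)^k\le(2\alpha+1)(2\alpha+3)\cdots(2\alpha+2k-1)$, which holds factor by factor with equality only for $k\le1$. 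This shows $\Beta(\alpha,\alpha)$ is strictly sub-Gaussian with $\sigma_{\text{opt}}^2=v=\frac{1}{4(2\alpha+1)}$, consistent with the stated upper bound since $\frac{1}{4(2\alpha+1)}=\frac{1}{4(\alpha+\beta+1)}$ here.

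For the asymmetric case $\alpha\neq\beta$ the series comparison breaks down and I would turn to the ODE. Kummer's equation $\lambda m''+(\alpha+\beta-\lambda)m'-\alpha m=0$ for $m={}_1F_1(\alpha;\alpha+\beta;\lambda)$ translates, through $h:=m'/m$, into the Riccati relation $\lambda h'+\lambda h^2+(\alpha+\beta-\lambda)h-\alpha=0$, which controls the tilted mean $h(\lambda)=\E_\lambda[X]\in(0,1)$ and its derivative $h'=K''=\Var_\lambda[X]$. The strict lower bound is then immediate: since $\psi(\lambda)=v+\frac{\kappa_3}{3}\lambda+O(\lambda^2)$ near $0$ and the third cumulant $\kappa_3$ is nonzero precisely when $\alpha\neq\beta$, $\psi$ strictly exceeds $v$ on one side of $0$, whence $\Var[X]<\sigma_{\text{opt}}^2$, the sign of $\kappa_3$ also fixing whether $x_0$ is positive or negative. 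The upper bound $\sigma_{\text{opt}}^2<\frac{1}{4(\alpha+\beta+1)}$ I would obtain by proving $K(\lambda)<\frac{\lambda^2}{8(\alpha+\beta+1)}$ for $\lambda\neq0$, bounding the tilted variance $K''=h'$ through the Riccati relation; combined with the AM--GM remark $\Var[X]=\frac{4\alpha\beta}{(\alpha+\beta)^2}\,\sigma_0^2\le\sigma_0^2$ (equality iff $\alpha=\beta$) this places $\sigma_{\text{opt}}^2$ strictly between the two candidates.

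The main obstacle is the uniqueness of the critical point $x_0$, equivalently that $\phi(\lambda):=\lambda K'(\lambda)-2K(\lambda)$ has a single nonzero root. I would study $\phi$ through its derivatives $\phi'=\lambda K''-K'$ and $\phi''=\lambda K'''$, expressing the higher cumulant derivatives by repeatedly differentiating the Riccati relation, so that the sign changes of $\phi$ can be tracked from the monotonicity of $K''=\Var_\lambda[X]$ in $\lambda$. Controlling this sign pattern uniformly for all $\alpha\neq\beta>0$, and in particular ruling out spurious stationary points, is the delicate part, and is exactly where the fine analysis of the confluent hypergeometric ODE is required rather than the soft arguments used above.
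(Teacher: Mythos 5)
Your variational setup is sound and in places cleaner than the paper's: identifying $\sigma_{\text{opt}}^2=\sup_{\lambda\neq0}\psi(\lambda)$ with $\psi=2K/\lambda^2$, reading off both displayed formulas from the first-order condition $x_0K'(x_0)=2K(x_0)$ together with Kummer's derivative identity, the symmetric-case series comparison (which is exactly the paper's Section~2.2 argument, with the same moment formula in disguise), and the strict lower bound via the sign of $\kappa_3$ (the paper extracts the same information from the sign of $v_t''(0)$) are all correct. The paper instead deforms the candidate proxy variance along a segment $\sigma_t^2$ from $\frac{1}{4(\alpha+\beta+1)}$ down to $\Var[X]$ and characterizes the optimum as the endpoint of the interval of $t$ for which $u_t\geq 0$, the double zero of $u_{t_{\text{opt}}}$ producing the same system of equations; your existence argument for the maximizer (continuity of $\psi$, $\psi\to0$ at $\pm\infty$, $\psi>\Var[X]$ on one side of $0$) is a legitimate substitute for that homotopy.

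However, the two steps you defer are precisely the substance of the theorem, and neither is carried out. First, the upper bound $\sigma_{\text{opt}}^2<\frac{1}{4(\alpha+\beta+1)}$ is Elder's conjectured bound and the paper's main technical achievement (Section~2.3.2): it is proved there by showing that the source term $xP_2(x;0)$ of the ODE satisfied by $u_0$ is nonnegative and running a Rolle-type contradiction at the first sign change of $u_0'$. Your proposed substitute --- deducing $K(\lambda)<\frac{\lambda^2}{8(\alpha+\beta+1)}$ from a bound on the tilted variance $K''=\Var_\lambda[X]$ via the Riccati equation --- is not executed, and the uniform bound $\sup_\lambda K''(\lambda)\leq\frac{1}{4(\alpha+\beta+1)}$ it implicitly targets is a strictly \emph{stronger} statement than what the theorem needs (sub-Gaussianity only requires the averaged inequality $K(\lambda)=\int_0^\lambda(\lambda-s)K''(s)\,\ddr s\leq\sigma_0^2\lambda^2/2$); you would have to verify that this pointwise bound actually holds for all $\alpha\neq\beta$, which is far from obvious --- as $\alpha,\beta\to0$ both sides tend to $1/4$, so there is no slack to absorb a crude estimate. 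Second, the uniqueness of the nonzero root of $\lambda K'(\lambda)-2K(\lambda)$, which the theorem asserts and the paper proves in the remark closing Section~2.3.3 by re-running the $P_2$ sign analysis between two hypothetical double zeros, is explicitly left by you as ``the delicate part.'' As it stands, the proposal establishes the form of the answer, the existence of a maximizer, and the strict lower bound, but not the upper bound nor the uniqueness claim; both gaps would need the kind of ODE sign analysis the paper performs (or a genuine alternative) to be closed.
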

Equation~\eqref{eq:def_x0} defining $x_0$  is a transcendental equation, the solution of which is not available in closed form. However, it is simple to evaluate numerically. 
The values of the variance, optimal proxy variance and its simple upper bound are illustrated on Figure~\ref{fig:proxy}. Note that for a fixed value of the sum of the parameters, $\alpha+\beta = S$, the optimal proxy variance deteriorates when $\alpha$, or equivalently $\beta$,  gets close to 0 or to $S$. This is reminiscent of the Bernoulli optimal proxy variance behavior which deteriorates when the success probability moves away from $\frac{1}{2}$ \citep{buldygin2000binary}.
\begin{figure}[!b]
\includegraphics[trim={1cm 0.5cm 0.5cm 0.5cm},clip,width=.33\textwidth]{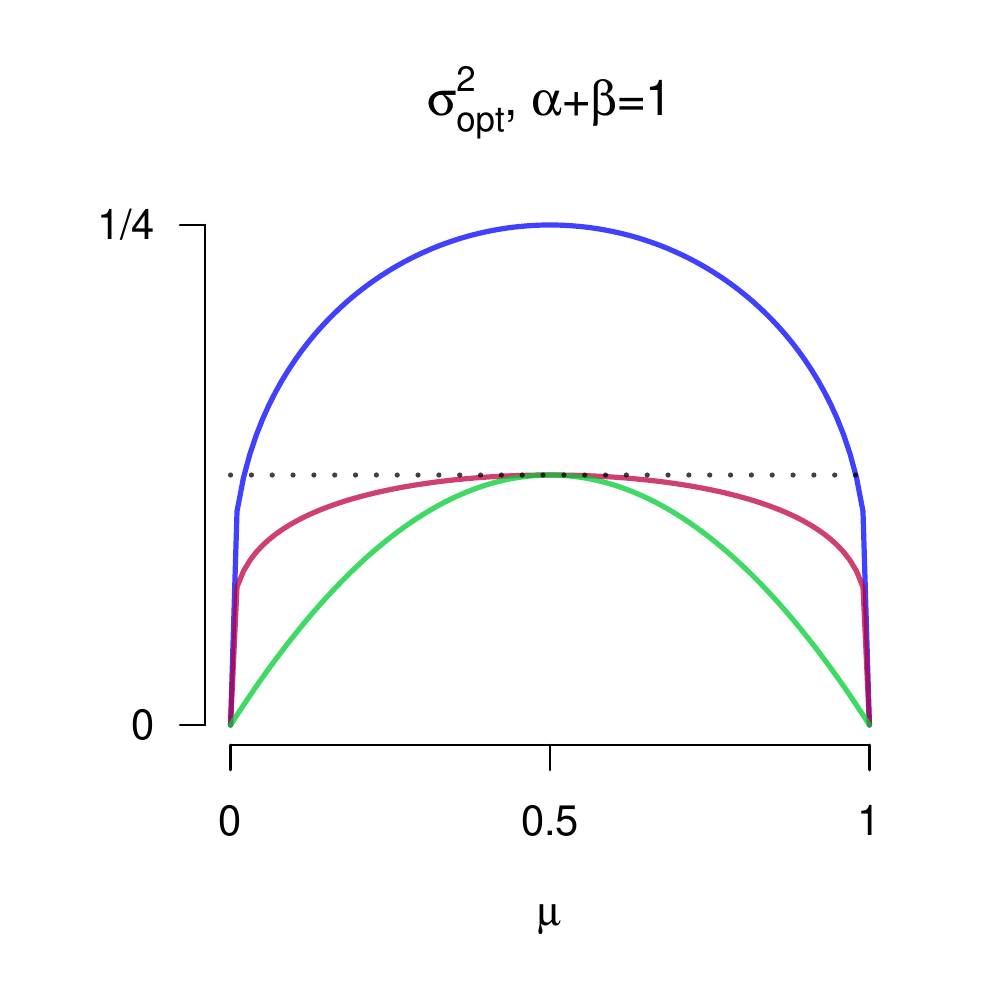} 
\includegraphics[trim={1cm 0.5cm 0.5cm 0.5cm},clip,width=.33\textwidth]{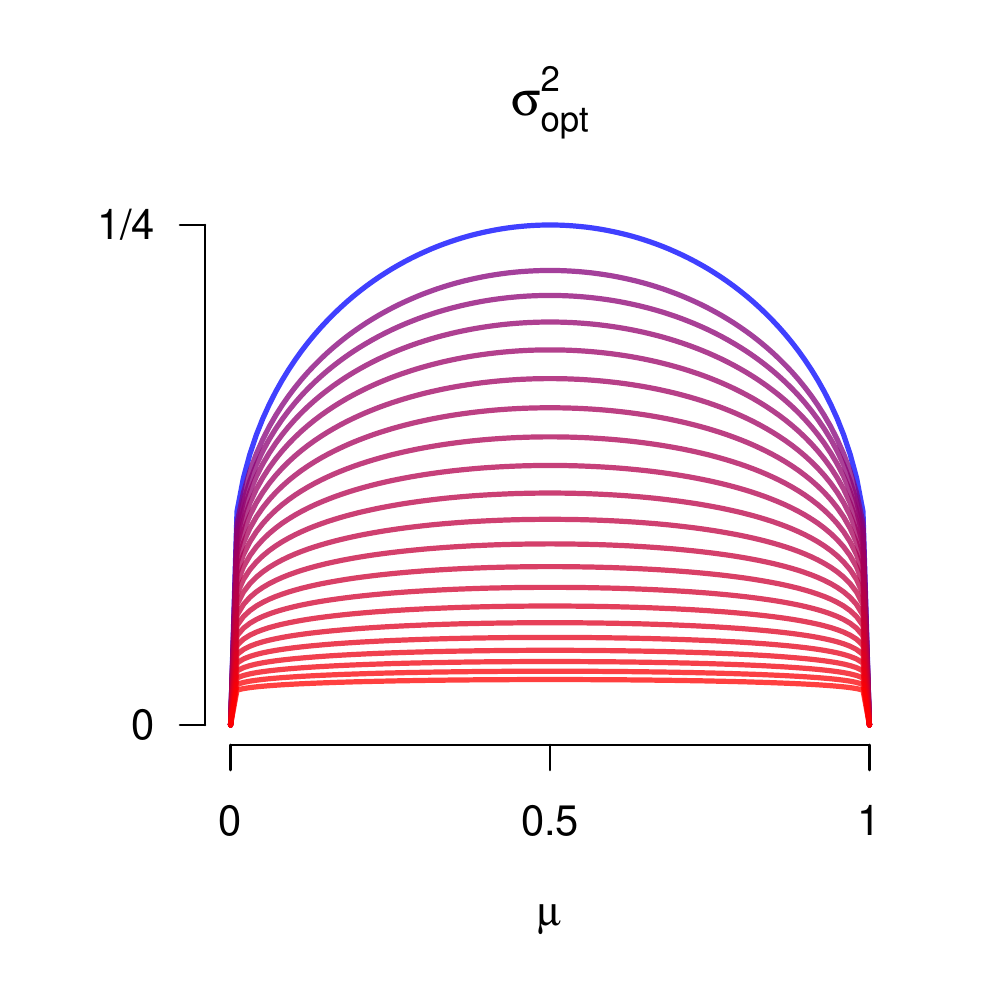} 
 \includegraphics[trim={6.5cm 1cm 5.1cm 2.7cm},clip,width=.3\textwidth]{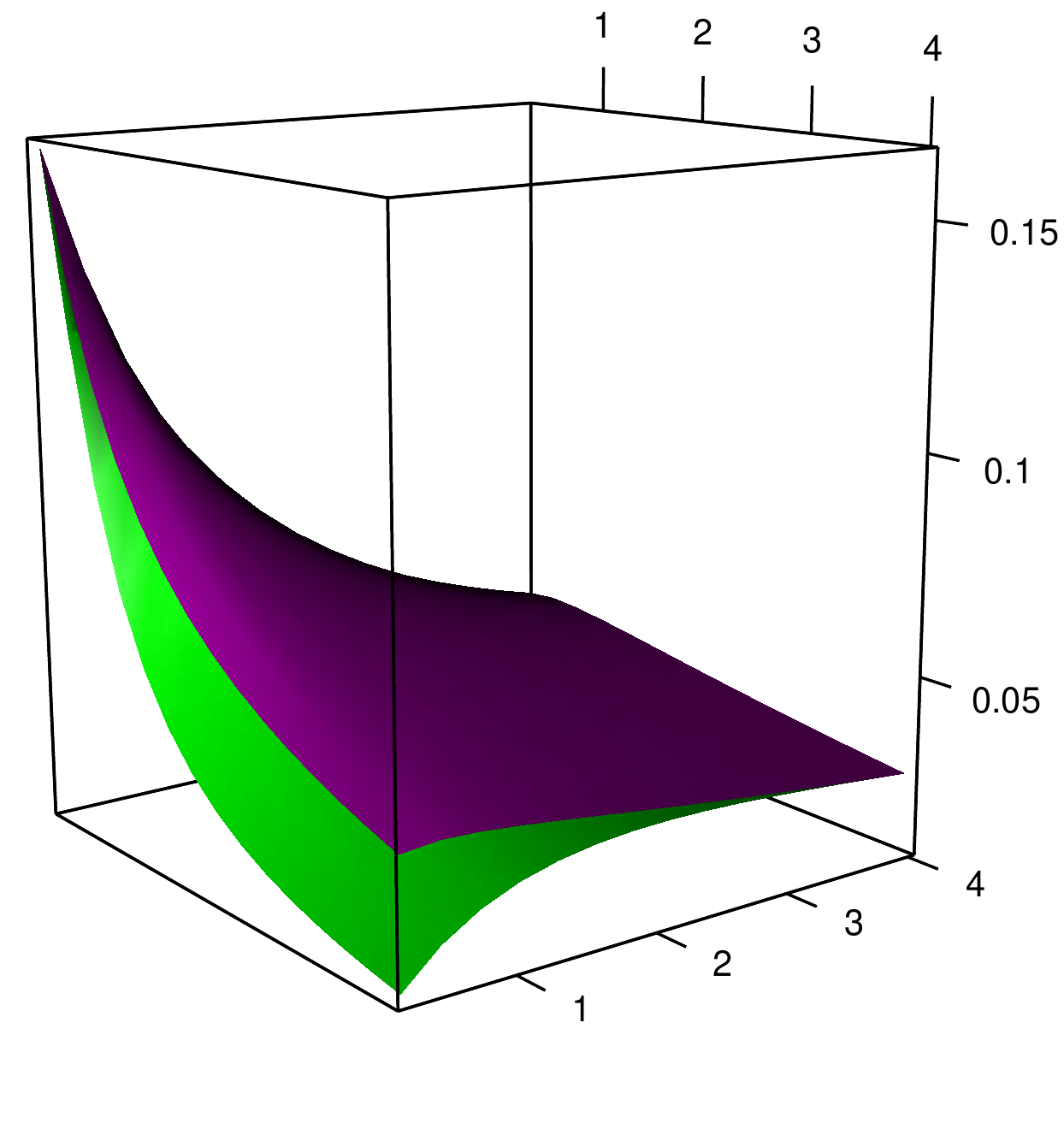}
\caption{\label{fig:proxy}
\textit{Left}: curves of $\text{\normalfont{Var}}[\Beta(\alpha,\beta)]$ (green), $\sigma_{\text{opt}}^2(\alpha,\beta)$ (purple) and $\frac{1}{4(\alpha+\beta+1)}$ (dotted black) for the $\Beta(\alpha,\beta)$ distribution with $\alpha+\beta$ set to 1, $\sigma_{\text{opt}}^2(\mu)$ for the $\Bern(\mu)$ distribution (blue); varying mean $\mu$ on the $x$-axis. 
\textit{Center}: curves of  $\sigma_{\text{opt}}^2(\mu)$ for the $\Bern(\mu)$ distribution  (blue), and of $\sigma_{\text{opt}}^2(\alpha,\beta)$ for the $\Beta(\alpha,\beta)$ distribution with $\alpha+\beta$ varying on a log scale from 0.1 (purple) to 10 (red); varying mean $\mu$ on the $x$-axis. 
\textit{Right}: surfaces of $\text{\normalfont{Var}}[\Beta(\alpha,\beta)]$ (green) and $\sigma_{\text{opt}}^2(\alpha,\beta)$ (purple), for values of $\alpha$ and $\beta$ varying in $[0.2, 4]$. 
}
\end{figure}
The intuition of the proof can be seen from Figure~\ref{fig:proof} (Section \ref{FIG2}) where we represent the difference $\lambda\mapsto\exp\left(\mathbb{E}[X]\lambda+\frac{\sigma^2}{2}\lambda^2\right)-\E[\exp(\lambda X)]$ for various values of $\sigma^2$. The main argument is that the optimal proxy variance is obtained for the curve (in magenta) whose positive local minimum equals zero, thus leading to the system of equations of Theorem~\ref{thm:1}. \bigskip
\begin{cor}\label{cor}
The Beta distribution $\Beta(\alpha,\beta)$ is strictly sub-Gaussian if and only if $\alpha=\beta$.
\end{cor}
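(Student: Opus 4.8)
The plan is to split the biconditional and dispose of the two directions by leaning on Theorem~\ref{thm:1} for the forward implication and on a soft cumulant argument for the converse. For the ``if'' direction ($\alpha=\beta \Rightarrow$ strictly sub-Gaussian) there is essentially nothing to do: Theorem~\ref{thm:1} asserts in the symmetric case that $\Var[\Beta(\alpha,\alpha)]=\sigma_{\text{opt}}^2(\alpha,\alpha)=\frac{1}{4(2\alpha+1)}$, which is exactly the condition $\sigma_{\text{opt}}^2=\Var$ defining strict sub-Gaussianity in Definition~\ref{Def1} (one could equally read this off the explicit series comparison carried out in Section~\ref{sec:equal}).

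For the ``only if'' direction I would argue by contraposition through the third cumulant, which yields a clean necessary condition for strict sub-Gaussianity that is independent of the full strength of Theorem~\ref{thm:1}. Writing $\psi(\lambda)=\ln\E[\exp(\lambda(X-\mu))]$ for the centred cumulant generating function and setting $h(\lambda)=\frac12\Var[X]\,\lambda^2-\psi(\lambda)$, strict sub-Gaussianity is precisely the statement $h\geq 0$ on $\R$. Since the Beta moment generating function ${}_1F_1(\alpha;\alpha+\beta;\lambda)$ is entire, $\psi$ is analytic at $0$ with $\psi(0)=\psi'(0)=0$, $\psi''(0)=\Var[X]$ and $\psi'''(0)=\kappa_3$, the third cumulant; a Taylor expansion then gives $h(\lambda)=-\frac{\kappa_3}{6}\lambda^3+O(\lambda^4)$ as $\lambda\to0$. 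Hence if $\kappa_3\neq0$ the cubic leading term drives $h$ strictly negative for small $\lambda$ of a suitable sign, so $h\geq0$ fails and the distribution is not strictly sub-Gaussian. It then suffices to observe that the third cumulant of $\Beta(\alpha,\beta)$ vanishes only when $\alpha=\beta$, which is immediate from the skewness formula $\kappa_3/\Var[X]^{3/2}=\frac{2(\beta-\alpha)\sqrt{\alpha+\beta+1}}{(\alpha+\beta+2)\sqrt{\alpha\beta}}$, nonzero precisely when $\alpha\neq\beta$. Alternatively, the converse also follows directly from the strict inequality $\Var[\Beta(\alpha,\beta)]<\sigma_{\text{opt}}^2(\alpha,\beta)$ recorded in the $\alpha\neq\beta$ case of Theorem~\ref{thm:1}.

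I do not expect a genuine obstacle here, since the entire content is already packaged inside Theorem~\ref{thm:1}; the only thing worth spelling out is the elementary but conceptually clarifying fact that a nonvanishing \emph{odd} cumulant obstructs strict sub-Gaussianity, combined with the observation that the asymmetry of the Beta density ($\alpha\neq\beta$) is already detected at the level of the third cumulant. The mildest point of care is the justification for differentiating under the expectation defining $\psi$ near $0$, which is unproblematic because ${}_1F_1(\alpha;\alpha+\beta;\lambda)$ is entire and strictly positive on a neighbourhood of the origin.
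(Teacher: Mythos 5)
Your proof is correct, and the ``only if'' half takes a genuinely different route from the paper. The paper states the corollary without a separate proof: it is read off directly from the dichotomy in Theorem~\ref{thm:1}, namely $\Var[\Beta(\alpha,\beta)]<\sigma_{\text{opt}}^2(\alpha,\beta)$ for $\alpha\neq\beta$ versus equality for $\alpha=\beta$; your closing remark (``alternatively\ldots'') is exactly that argument, and your ``if'' direction coincides with the paper's. Your primary argument for the converse --- that a nonvanishing third cumulant forces $h(\lambda)=\tfrac12\Var[X]\lambda^2-\psi(\lambda)=-\tfrac{\kappa_3}{6}\lambda^3+O(\lambda^4)$ to go negative for small $\lambda$ of the right sign, combined with the fact that the Beta skewness vanishes only at $\alpha=\beta$ --- is sound and is in fact the same local mechanism the paper deploys \emph{inside} the proof of Theorem~\ref{thm:1}: there, $v_1''(0)=0$ and the claim that $v_1$ is strictly negative near $0$ (used to show $t_{\text{opt}}<1$) is precisely the cubic-order obstruction you make explicit. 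What your route buys is independence from the hard part of Theorem~\ref{thm:1}: it establishes failure of strict sub-Gaussianity for $\alpha\neq\beta$ without identifying $\sigma_{\text{opt}}^2$ at all, and it isolates a transferable principle (any distribution with nonzero third cumulant is not strictly sub-Gaussian). What the paper's route buys is brevity, since once Theorem~\ref{thm:1} is available the corollary is immediate. Your point of care about differentiating under the expectation is indeed harmless here, since ${}_1F_1(\alpha;\alpha+\beta;\cdot)$ is entire and positive near the origin.
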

As a direct consequence, we obtain the strict sub-Gaussianity of the uniform, the arc-sine and the Wigner semicircle distributions, as special cases up to a trivial rescaling of the Beta$(\alpha,\alpha)$ distribution respectively with $\alpha$ equal to $1$, $\frac{1}{2}$ and $\frac{3}{2}$.

\subsection{The $\text{Beta}(\alpha,\alpha)$ distribution is strictly sub-Gaussian\label{sec:equal}}
Let $\sigma_0^2(\alpha)=\text{Var}[\Beta(\alpha,\alpha)]=\frac{1}{4(2\alpha+1)}$. Since a random variable $X\sim\Beta(\alpha,\alpha)$ is symmetric around $\frac{1}{2}$, only its even centered moments are non-zero. The reason why $\E[\exp(\lambda(X-\E[X]))]\le\exp\big(\frac{\lambda^2\sigma_0^2(\alpha)}{2}\big)$ is because the coefficients of the series expansions at $\lambda=0$ of each side: 
\begin{align}\label{eq:AlphaAlphaCase}
\E[\exp(\lambda(X-\E[X]))] &= \sum_{j=0}^{\infty} \E\left[(X-1/2)^{2j}\right]\frac{\lambda^{2j}}{(2j)!},\\
\exp\left(\frac{\lambda^2\sigma_0^2(\alpha)}{2}\right)&=\sum_{j=0}^{\infty} \frac{\sigma^{2j}}{2^{j}}\frac{\lambda^{2j}}{j!}=\sum_{j=0}^{\infty} \frac{\lambda^{2j}}{2^{2j}2^j(2\alpha+1)^j (j!)},
\end{align}
satisfy the inequalities:
\begin{align}\label{eq:condition}
 \E\left[\left(X-\frac{1}{2}\right)^{2j}\right]\frac{1}{(2j)!} \leq \frac{\sigma_0^{2j}(\alpha)}{2^{j}}\frac{1}{j!}.  
\end{align}
Indeed, algebra yields:
\begin{align}\label{eq:centered_moments}
 \E\left[\left(X-\frac{1}{2}\right)^{2j}\right] = \frac{(2j)!}{2^{2j}j!}\frac{\Gamma(2\alpha )\Gamma(\alpha +j)}{\Gamma(\alpha )\Gamma(2(\alpha +j))}=\frac{(2j)!}{2^{2j}j!}\frac{(\alpha )_j}{(2\alpha )_{2j}}.
\end{align}
Combining the expression of the raw moments \eqref{eq:RawMoments} with the following inequality:
\begin{align}
\frac{(\alpha )_j}{(2\alpha )_{2j}} = \frac{1}{2^j\underset{l=1}{\overset{j}{\prod}}(2\alpha +2l-1)}\leq \frac{1}{(2(2\alpha +1))^j},
\end{align}
in \eqref{eq:AlphaAlphaCase} concludes the proof.

\begin{rem}
The non-symmetrical distribution with $\alpha\neq\beta$ has even centered moments whose expressions are not as simple as~\eqref{eq:centered_moments}. Moreover, it has obviously non-zero odd centered moments. For this last reason, the present proof does not carry over to the case $\alpha\neq \beta$.
\end{rem}

\subsection{Optimal proxy variance for the $\text{Beta}(\alpha,\beta)$ distribution\label{sec:distinct}}
\subsubsection{Connection with ordinary differential equations}
In this section, we assume that $X\sim\text{Beta}(\alpha,\beta)$ with $\beta\neq\alpha$. We denote $\sigma_0^2=\frac{1}{4(\alpha+\beta+1)}$ (we omit the dependence on $\alpha$ and $\beta$ for compactness) and define for all $t\in\R$: 
\begin{equation*}
	\sigma^2_t=\frac{(1-t)}{4(1+\alpha+\beta)}+t\frac{\alpha\beta}{(\alpha+\beta)^2(\alpha+\beta+1)}
	=\frac{1}{4(1+\alpha+\beta)}+\frac{(\beta-\alpha)^2}{4(\alpha+\beta)^2(1+\alpha+\beta)}t.
\end{equation*}
In other words, the decreasing function $t\mapsto \sigma^2_t$ maps the interval $[0,1]$ to the interval $[\sigma_1^2,\sigma_0^2]$ with $\sigma_1^2=\text{Var}[X]$. Then, we introduce the function $u_t$ defined by:
\beqq u_t(x)\overset{\text{def}}{=}\exp\left(\frac{\alpha}{\alpha+\beta} x+\frac{\sigma_t^2}{2}x^2\right)-\E[\exp(x X)] \,\,,\,\, \forall \, x\in \mathbb{R},\eeqq 
where $\sigma^2_t$-sub-Gaussianity amounts to non negativity of $u_t$ on $\R$. 
Since the confluent hypergeometric function $y: x\mapsto y(x) = \,_1F_1(\alpha, \alpha+\beta;x)$ satisfies the linear second order ordinary differential equation $xy''(x)+(\alpha+\beta-x)y'(x)-\alpha y(x)=0$, we obtain together with equation \eqref{eq:HyperGeo}  that $u_t$ is the unique solution of the Cauchy problem:
\beq \label{ODE3} \left\{\begin{array}{ll}
        xu_t''(x)+(\alpha+\beta-x)u_t'(x)-\alpha u_t(x)\cr
        \quad\quad\quad\quad\quad\quad\quad\quad=\frac{x}{16(\alpha+\beta)^4(1+\alpha+\beta)^2}\exp\left(\frac{\alpha}{\alpha+\beta} x+\frac{\sigma_t^2}{2}x^2\right)P_2(x;t),\cr
				u_t(0)=0 \text{ and } u_t'(0)=0,
    \end{array}
\right.
\eeq
where $P_2$ is a polynomial of degree $2$ in $x$:
\begin{multline*}
	P_2(x;t)=4(1-t)(\alpha^2-\beta^2)^2(1+\alpha+\beta)^2\\
	-4(\beta^2-\alpha^2)(1+\alpha+\beta)\left((\alpha+\beta)^2-t(\beta-\alpha)^2\right)x
	+\left((\alpha+\beta)^2-t(\beta-\alpha)^2\right)^2x^2.
\end{multline*}
For normalization purposes, we also define:
\begin{multline}
	\label{vt} v_t(x)=16(\alpha+\beta)^4(1+\alpha+\beta)^2 u_t(x)\exp\left(-\frac{\alpha }{\alpha+\beta}x-\frac{\sigma_t^2}{2}x^2\right),\\
=16(\alpha+\beta)^4(1+\alpha+\beta)^2\left(1-\E[\exp(x X)]\exp\left(-\frac{\alpha}{\alpha+\beta} x-\frac{\sigma_t^2}{2}x^2\right)\right).
\end{multline}
The function $v_t$ is the unique solution of the Cauchy problem:
\begin{align}
	\label{ODE4} \left\{\begin{array}{ll}
        xv_t''(x)+Q_1(x;t)v_t'(x)+Q_0(x;t) v_t(x)=xP_2(x;t),\cr
				v_t(0)=0 \text{ and } v_t'(0)=0,
    \end{array}
\right.
\end{align}
with:
\begin{align*}
	Q_1(x;t)&=\alpha+\beta-\frac{\beta-\alpha}{\alpha+\beta}x+\frac{(\alpha+\beta)^2-t(\beta-\alpha)^2}{2(\alpha+\beta)^2(1+\alpha+\beta)}x^2,\\
Q_0(x;t)
&=\frac{1}{16(\alpha+\beta)^4(1+\alpha+\beta)^2}\,xP_2(x;t).
\end{align*}
Note that $u_t$ and $v_t$ have the same sign hence proving that $u_t$ is positive (resp. negative) is equivalent to proving that $v_t$ is positive (resp. negative). From standard theory on ODEs \citep{BirkhoffODE,RobinsonODE}, we get that the functions $u_t$ and $v_t$ are $\mathcal{C}^\infty(\mathbb{R})$. Indeed, the only possible singularity is at $x=0$ but the initial conditions imply that the function is regular at this point. In particular, a Taylor expansion at $x=0$ shows that:
\beq \label{VSecond}v_t''(0)=\frac{P_2(0;t)}{1+Q_1(0;t)}=4(\beta^2-\alpha^2)^2(1+\alpha+\beta)(1-t).\eeq
We also observe that the discriminant of the polynomial $x\mapsto P_2(x;t)$ is given by:
\beqq \Delta_t=16t(1+\alpha+\beta)^2(\beta^2-\alpha^2)^2\left((\alpha+\beta)^2-t(\beta-\alpha)^2\right)^2.\eeqq
Hence we conclude that for $t>0$, $P_2$ admits two distinct real zeros that are positive, while for $t< 0$ it remains strictly positive on $\mathbb{R}$. For $t=0$, $P_2$ admits a double zero and thus remains positive on $\mathbb{R}$ appart from its zero.

\medskip

By definition \eqref{vt}, we want to study the sign of $v_t$ on $\mathbb{R}$. Indeed, showing that $v_t$ is positive on $\mathbb{R}$ then $X$ is equivalent to showing $\sigma_t^2$-sub-Gaussianity. We first observe that we may restrict the sign study on $\mathbb{R}_+$. Indeed, if we prove that: 
\beq \label{pass}\forall \,\lambda\geq 0\,\,,\,\, \E[\exp(\lambda X)]=\,_1F_1(\alpha;\alpha+\beta;\lambda)\leq\exp\left(\frac{\lambda \alpha}{\alpha+\beta}+\frac{\lambda^2 \sigma_t^2}{2}\right).\eeq
then, the case $\lambda<0$ is automatically obtained by noting that $1-X\sim\Beta(\beta,\alpha)$, whose mean is $\frac{\beta}{\alpha+\beta} = 1-\frac{\alpha}{\alpha+\beta}$. Therefore, applying \eqref{pass} to $1-X$ gives that for all $\lambda<0$:
\begin{multline*}
	\E[\exp(\lambda X)]=\exp(\lambda)\E[\exp(-\lambda (1-X))]\\
\leq \exp(\lambda)\exp\left(-\lambda (1-\frac{\alpha}{\alpha+\beta})+\frac{\sigma_t^2\lambda^2}{2}\right)=\exp\left(\lambda \frac{\alpha}{\alpha+\beta}+\frac{\sigma_t^2\lambda^2}{2}\right).
\end{multline*}
Eventually, in agreement with the general theory, we observe that for $t>1$ (i.e. $\sigma^2_t<\text{Var}[X]$), $X$ is not $\sigma_t^2$-sub-Gaussian. Indeed, the series expansion at $x=0$ \eqref{VSecond}, shows that for $t>1$, $v_t$ is strictly negative in a neighborhood of $0$. On the contrary, for $t<1$, the function $v_t$ is strictly positive in a neighborhood of $0$ so that we may not directly conclude. Note also that for any value of $t$, we always have $\underset{x\to \infty}{\lim}v_t(x)=+\infty$. 

\subsubsection{Proof that the $\text{Beta}(\alpha,\beta)$ distribution is $\sigma_0^2$-sub-Gaussian\label{SecSigma0}}
In this section, we take $t=0$. As explained above, this corresponds to a case where $P_2$ is positive on $\mathbb{R}$  (apart from its double zero). We prove that $u_0(x)>0$ for $x>0$ by proceeding by contradiction. Let us assume that there exists $x_1>0$ such that $u_0(x_1)=0$. Since the non-empty set $\{x>0 \,/\, u_0(x)=0\}$ is compact (because $u_0\in \mathcal{C}^\infty(\mathbb{R})$) and excludes a neighborhood of $0$, we may define $x_0=\min\{x>0 \text{ such that } u_0(x)=0\}>0$. Let us now define the set:
\beqq M=\{0<x<x_0 \text{ such that } u_0'(x)=0 \text{ and } u_0' \text{ changes sign at } x\}.\eeqq
Since $u_0(0)=u_0(x_0)=0$ and the facts that $u_0$ is strictly positive in a neighborhood of $0$ and $u_0'$ is continuous on $\mathbb{R}$, Rolle's theorem shows that $M$ is not empty and that:
\beqq m=\min\{x\in M\}\text{ exists and } 0<m<x_0.\eeqq
Evaluating the ODE \eqref{ODE3} at $x=m$ and using the fact that the polynomial $P_2$ is positive on $\mathbb{R}$ (appart from its double zero) leads to:
\beq\label{eq:FinalClue} mu_0''(m)+0-\alpha u_0(m)\geq 0 \,\Rightarrow \, u_0''(m)\geq \frac{\alpha}{m}u_0(m)>0.\eeq
However, combined with $u_0'(m)=0$, this contradicts the fact that $u_0'$ changes sign at $x=m$. 

\medskip

Thus, we conclude that there cannot exist $x_1>0$ such that $u_0(x_1)=0$. Since $u_0$ is strictly positive in a neighborhood of $0$ and continuous on $\mathbb{R}$, we conclude that it must remain strictly positive on $\mathbb{R}_+^*$.

\begin{rem} In this proof, the case $\beta=\alpha$ requires an adaptation since $u_0''(0)=0$. Thus, we must determine $u_0^{(4)}(0)>0$ ($u_0^{(3)}(0)=0$ by symmetry) to ensure that the function $u_0$ is locally convex and remains strictly positive in a neighborhood of $0$. Apart from this minor verification, the rest of the proof applies also to this case.  
\end{rem}

\subsubsection{Proof of the optimal proxy variance for the $\text{Beta}(\alpha,\beta)$ distribution\label{FIG2}}   
In this section we assume that $\beta\neq \alpha$. From general theorems regarding ODEs, we have that the application:
\beq \label{DoubleMap} 
g :\left\{\begin{array}{cll}
      [0,+\infty)\times [0,+\infty) & \to & \mathbb{R} \\
      (x,t) & \mapsto &  g(x,t)=v_t(x),\\
\end{array} 
\right.
\eeq
is smooth ($g\in \mathcal{C}^\infty([0,+\infty)\times [0,+\infty))$). Indeed, the $t$-dependence of the coefficients of the ODE \eqref{ODE4} is polynomial and thus smooth. The $x$-dependence of the coefficients of the ODE \eqref{ODE4} is polynomial and as explained above, the only possible singularity in $x$ is at $x=0$ but initial conditions ensure that the solutions $x\mapsto v_t(x)$ are always regular there. Since for all $t\geq 0$ we have $\underset{x\to \infty}{\lim}v_t(x)=+\infty$, we also have that the function:
\beqq h :\left\{\begin{array}{cll}
      [0,+\infty) & \to & \mathbb{R} \\
      t & \mapsto &  \min\{v_t(x),\,x\in \mathbb{R}_+^*\},\\
\end{array} 
\right.
\eeqq
is continuous.

\medskip

We now observe that for any $0\leq t< 1$, the functions $v_t$ are strictly positive in a neighborhood of $0$. More precisely, if we choose a segment $[0,t_0]$ with $t_0<1$, then for all $0\leq t\leq t_0$ we have that $v_t''(0)\geq v_{t_0}''(0)=4(\beta^2-\alpha^2)^2(1+\alpha+\beta)(1-t_0)>0$. Hence, we may choose $\eta>0$ such that for all $0\leq t\leq t_0$, we have $v_t$ is strictly positive on $]0,\eta]$. Moreover, since $\underset{x\to\infty }{\lim} v_0(x)=+\infty$, $v_0$ is bounded from below on $[\eta,+\infty[$ by a constant $A>0$. Thus, since $g$ is continuous, there exists a neighborhood of $t=0$ in which all solutions $v_t$ remain greater than $\frac{A}{2}$ on $[\eta,+\infty[$ and thus strictly positive on $\mathbb{R}_+^*$. This shows that for $t>0$, $\sigma_0^2$ is not optimal.

\medskip

Let us now introduce the set:
\beqq T_+=\{t\geq 0 \text{ such that } v_t \text{ is positive on } \mathbb{R}_+^*\}. \eeqq
Then, from the results presented above, we know that $T_+$ is non-empty, that it contains a neighborhood of $0$ and that it is bounded from above by $1$. Moreover, by connection with the initial problem \eqref{vt}, $T_+$ is an interval and thus is of the form $[0,t_{\text{opt}}]$ with $0<t_{\text{opt}}< 1$. Indeed $t\in T_+$ implies by construction that for all $s\leq t$, $s\in T_+$. Note also that $t<1$ since $v_1$ is strictly negative in a neighborhood of $0$ and thus $\min\{v_1(x),x\in \mathbb{R}_+^*\}<0$. Hence, the continuity of $h$ shows that there exists a neighborhood of $t=1$ in which the solutions $v_t$ are non-positive on $\mathbb{R}_+^*$. 
For $t=t_{\text{opt}}$ the function $v_{t_{\text{opt}}}$ must have a zero on $\mathbb{R}_+^*$ otherwise by continuity of $h$ we may find a neighborhood of $t_{\text{opt}}$ for which $\min\{v_t(x),\,x\in \mathbb{R}_+^*\}$ remains strictly positive thus contradicting the maximality of $t_{\text{opt}}$. Since $v_{t_{\text{opt}}}$ must remain positive, the zero is at least a double zero and therefore we find that there exists $x_0>0$ such that $v_{t_{\text{opt}}}(x_0)=0$, $v'_{t_{\text{opt}}}(x_0)=0$ and $v''_{t_{\text{opt}}}(x_0)\geq 0$. From \eqref{eq:HyperGeo} and \eqref{vt}, the conditions $v_{t_{\text{opt}}}(x_0)=0$, $v'_{t_{\text{opt}}}(x_0)=0$ are equivalent to the following system of equations (we use here the contiguous relations for the confluent hypergeometric function: $_1F_1'(a;b;x)=\frac{a}{b}~_1F_1(a;b;x)$):
\beq \label{SystemEq} \left\{\begin{array}{ll}
        _1F_1(\alpha;\alpha+\beta;x_0)=\exp\left(\frac{\alpha}{\alpha+\beta}x_0+\frac{\sigma^2_{t_{\text{opt}}}}{2}x_0^2\right),\cr
		\frac{\alpha}{\alpha+\beta}~_1F_1(\alpha+1;\alpha+\beta+1;x_0)=\left(\frac{\alpha}{\alpha+\beta}+\sigma^2_{t_{\text{opt}}}x_0\right)~_1F_1(\alpha;\alpha+\beta;x_0). 
    \end{array}
\right.
\eeq      
This is equivalent to say that $x_0\equiv x_0(\alpha,\beta)$ is the solution of the transcendental equation:
\beqq 
_1F_1(\alpha;\alpha+\beta;x_0)=\exp\left(\frac{\alpha x_0}{2(\alpha+\beta)}\left(1+\frac{_1F_1(\alpha+1;\alpha+\beta+1;x_0)}{_1F_1(\alpha;\alpha+\beta;x_0)}\right)\right),\eeqq
and that $\sigma^2_{t_{\text{opt}}}$ is given by:
\beqq
\sigma^2_{t_{\text{opt}}}=\frac{\alpha}{(\alpha+\beta)x_0}\left( \frac{_1F_1(\alpha+1;\alpha+\beta+1;x_0)}{_1F_1(\alpha;\alpha+\beta;x_0)}  -1\right). 
\eeqq
Note that by symmetry, we have $x_0(\beta,\alpha)=-x_0(\alpha,\beta)$ hence, $\sigma^2_{t_{\text{opt}}}(\beta,\alpha)=\sigma^2_{t_{\text{opt}}}(\alpha,\beta)$. Moreover, if $\beta>\alpha$ then $x_0(\alpha,\beta)>0$ while $\alpha>\beta$ implies $x_0(\alpha,\beta)<0$. 
We may illustrate the situation with Figure~\ref{fig:proof} which displays the difference function $x\mapsto u_t(x)$. 
\begin{rem} The system of equations \eqref{SystemEq} admits only one solution on $\mathbb{R}_+^*$. Indeed, let us transpose the problem from $v_{t_{\text{opt}}}$ to $u_{t_{\text{opt}}}$ using \eqref{vt} and assume that there exist two points $0<x_0<x_1$ such that $u_{t_{\text{opt}}}(x_0)=0$, $u'_{t_{\text{opt}}}(x_0)=0$ and $u_{t_{\text{opt}}}(x_1)=0$, $u'_{t_{\text{opt}}}(x_1)=0$ with $u_{t_{\text{opt}}}$ strictly positive on $(x_0,x_1)$ (hence $u''_{t_{\text{opt}}}(x_0)\geq 0$ and $u''_{t_{\text{opt}}}(x_1)\geq 0$). Using \eqref{ODE3}, this implies that $P_2(x_0;t_{\text{opt}})\geq 0$ and $P_2(x_1;t_{\text{opt}})\geq 0$. If we denote $x_-<x_+$ the potential distinct positive zeros of $x\mapsto P_2(x;t_{\text{opt}})$ we may exclude that $x_0\leq x_-$. Indeed, if $x_0\leq x_-$ then we may apply the same argument to $u_{t_{\text{opt}}}$ on the interval $[0,x_0]$ as the one developed for $u_0$ in Section \ref{SecSigma0} and obtain a contradiction. Thus, the only remaining case is to assume $x_1>x_0>x_+$. In that case, since $u_{t_{\text{opt}}}(x_0)=0$, $u'_{t_{\text{opt}}}(x_0)=0$, $u_{t_{\text{opt}}}(x_1)=0$ and $x\mapsto P_2(x;t_{\text{opt}})$ is positive on $[x_0,x_1]$, we may apply the same argument to $u_{t_{\text{opt}}}$ on the interval $[x_0,x_1]$ as the one developed for $u_0$ in Section \ref{SecSigma0} and obtain a contradiction. 
\end{rem}
\begin{figure}
\begin{center}
\includegraphics[height=.4\textwidth]{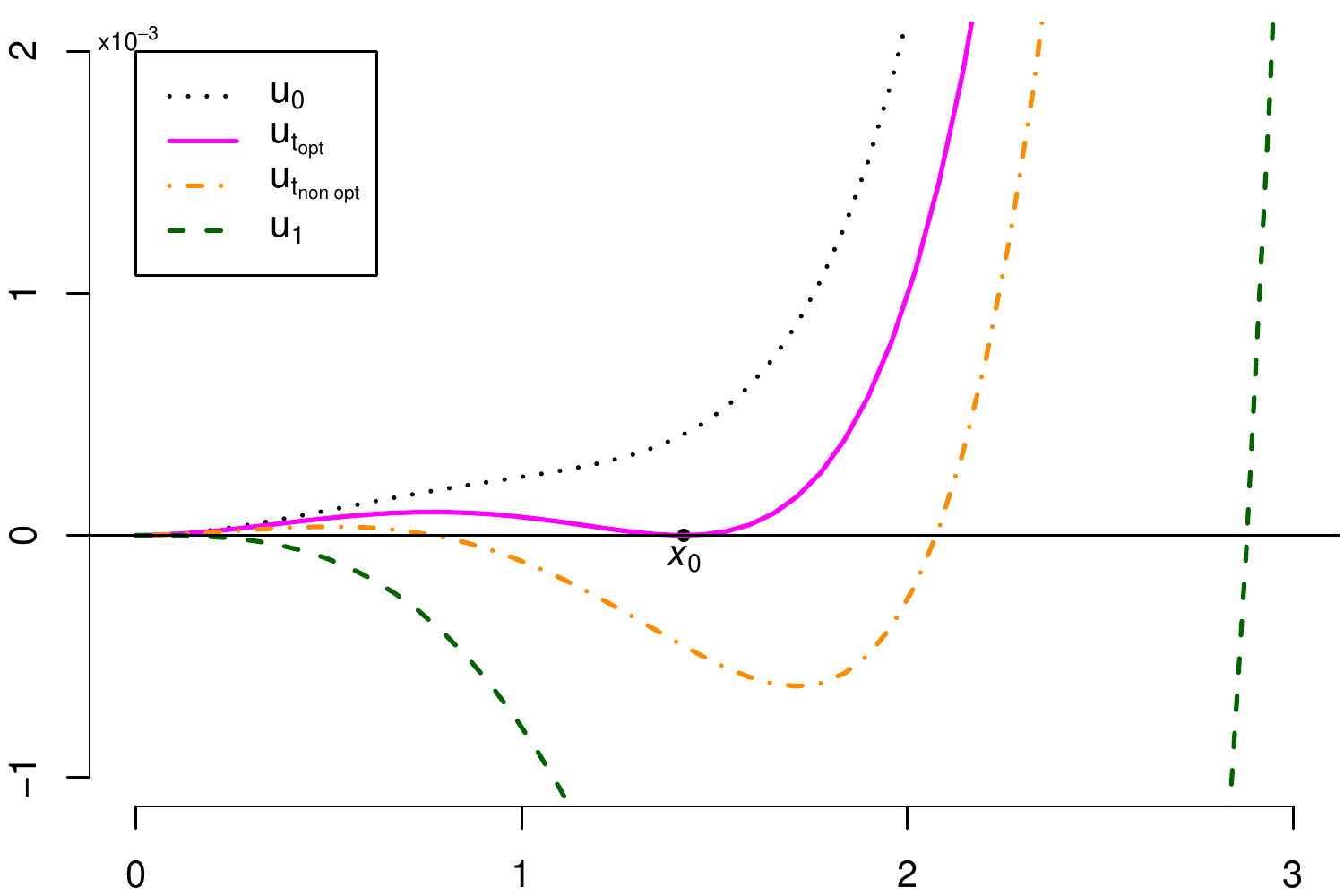}
\end{center}
\caption{\label{fig:proof}
Difference function $x\mapsto u_t(x)$. For $t=0$ (simple upper bound $\sigma_0^2$), the curve [dotted black] remains strictly positive. For $t=t_{\text{opt}}$ (optimal proxy variance $\sigma^2_{\text{opt}}$), the curve [magenta] has zero minimum (at $x_0$). For $t=1$ (leading to the variance), the curve [dashed green] has negative second derivative at $x=0$, hence is directly negative around 0. The intermediate case with $t_{\text{non opt}}$ in the interval $(t_{\text{opt}},1)$ produces a curve [orange, dash and dots] which is first positive, then negative, and positive again.
}
\end{figure}

\section{Relations to other distributions\label{sec:dir}}

\subsection{Optimal proxy variance for the Bernoulli distribution}

We show that our proof technique can be used to recover the optimal proxy variance for the Bernoulli distribution, known since \cite{kearns1998large}. This is illustrated by the center panel of Figure~\ref{fig:proxy}.

\begin{thm}[Optimal proxy variance for the Bernoulli distribution]\label{thm:3}
For any $\mu\in(0,1)$, the Bernoulli distribution with mean $\mu$ is sub-Gaussian with optimal proxy variance $\sigma_{\text{opt}}^2(\mu)$ given by:
\begin{equation}\label{eq:def_sig_mu}
        \sigma_{\text{opt}}^2(\mu)= \frac{(1-2\mu)}{2\ln \frac{1-\mu}{\mu}}.
\end{equation}
\end{thm}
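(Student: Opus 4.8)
The plan is to mirror the tangency (double-zero) argument of Section~\ref{FIG2}, exploiting that the Bernoulli moment-generating function is elementary rather than hypergeometric, which will collapse the transcendental system into closed form. Write $M(\lambda)=\E[e^{\lambda X}]=(1-\mu)+\mu e^{\lambda}$ and set $\psi(\lambda)=\ln M(\lambda)-\mu\lambda=\ln\E[e^{\lambda(X-\mu)}]$, so that $\sigma^2$-sub-Gaussianity is exactly the requirement $\psi(\lambda)\le\frac{\sigma^2\lambda^2}{2}$ for all $\lambda\in\R$. Consequently the optimal proxy variance is the supremum $\sigma_{\text{opt}}^2(\mu)=\sup_{\lambda\ne 0}\frac{2\psi(\lambda)}{\lambda^2}$. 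Since $\psi(0)=\psi'(0)=0$, $\psi>0$ away from the origin, and $\psi(\lambda)/\lambda^2\to 0$ as $\lambda\to\pm\infty$, for $\mu\ne\tfrac12$ this supremum is attained at a finite $x_0\ne 0$, characterized exactly as the double-zero condition for $v_{t_{\text{opt}}}$: the tangency system $\psi(x_0)=\frac{\sigma^2 x_0^2}{2}$ and $\psi'(x_0)=\sigma^2 x_0$, equivalently $x_0\psi'(x_0)=2\psi(x_0)$.

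The key simplification is that this system is solvable explicitly. Writing $p(\lambda)=M'(\lambda)/M(\lambda)=\frac{\mu e^{\lambda}}{(1-\mu)+\mu e^{\lambda}}$ for the tilted mean, so that $\psi'(\lambda)=p(\lambda)-\mu$, I would guess, and then verify by direct substitution, that $x_0=2\ln\frac{1-\mu}{\mu}$ solves the tangency equation. Indeed at this point $\mu e^{x_0}=\frac{(1-\mu)^2}{\mu}$, whence $M(x_0)=\frac{1-\mu}{\mu}$ and $p(x_0)=1-\mu$, the tilted mean reflected across $\tfrac12$; substituting into $\sigma^2=\psi'(x_0)/x_0=(p(x_0)-\mu)/x_0$ returns precisely $\sigma^2=\frac{1-2\mu}{2\ln\frac{1-\mu}{\mu}}$, and one checks that $x_0\psi'(x_0)=2\psi(x_0)$ then holds identically.

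It remains to confirm that this critical point is the global maximizer, equivalently that $g(\lambda)=\frac{\sigma^2\lambda^2}{2}-\psi(\lambda)$ is nonnegative on $\R$ for $\sigma^2=\frac{1-2\mu}{2\ln\frac{1-\mu}{\mu}}$. Here I would use the explicit second derivative $g''(\lambda)=\sigma^2-p(\lambda)\bigl(1-p(\lambda)\bigr)$ together with $g(0)=g'(0)=0$ and $g(x_0)=g'(x_0)=0$. Because $p$ increases monotonically from $0$ to $1$ while $p(1-p)$ is unimodal with maximum $\tfrac14>\sigma^2$, the function $g''$ follows the sign pattern $+,-,+$, so $g$ is convex, then concave, then convex, with the two flat tangencies lying in the first and last convex regions (one uses $\sigma^2>\Var[X]=\mu(1-\mu)$ to place $x_0$ beyond the second sign change). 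A short bookkeeping over the three regions—convexity pinning the minimum at $0$ on the left piece and at $x_0$ on the right piece, concavity forcing $g$ above its positive chord on the middle piece—then yields $g\ge 0$ everywhere, hence $\sigma_{\text{opt}}^2(\mu)=\sigma^2$. This global-positivity bookkeeping, which replaces the role played by the positivity of $P_2$ and Rolle's theorem in the Beta proof, is the main obstacle; the cases $\mu>\tfrac12$ and $\mu=\tfrac12$ follow respectively from the $X\mapsto 1-X$ symmetry sending $\Bern(\mu)$ to $\Bern(1-\mu)$, under which the stated formula is invariant, and from continuity, the latter returning the strictly sub-Gaussian value $\tfrac14=\Var[\Bern(\tfrac12)]$.
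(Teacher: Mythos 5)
Your proof is correct, but it takes a genuinely different route from the paper's. The paper obtains Theorem~\ref{thm:3} as the degenerate limit $\alpha\to0$ with $\frac{\alpha}{\alpha+\beta}=\mu$ fixed of the Beta ODE machinery: the Cauchy problem \eqref{ODE3} collapses to a second-order ODE with the explicit solution \eqref{SolutionAlpha0}, and the tangency system $u_{t_0,\mu}(x_0)=u'_{t_0,\mu}(x_0)=0$ is then solved in closed form through two successive changes of variables, yielding $x_0=2\ln\frac{1-\mu}{\mu}$ and the stated $\sigma^2_{\text{opt}}(\mu)$. You instead work directly with the cumulant generating function $\psi(\lambda)=\ln\bigl((1-\mu)+\mu e^{\lambda}\bigr)-\mu\lambda$, observe that $\sigma^2_{\text{opt}}=\sup_{\lambda\neq0}2\psi(\lambda)/\lambda^2$, verify the same tangency point by substitution, and then prove global nonnegativity of $g(\lambda)=\frac{\sigma^2\lambda^2}{2}-\psi(\lambda)$ via the sign pattern of $g''=\sigma^2-p(1-p)$. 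This replaces both the limit-of-Beta justification and the Rolle/positivity-of-$P_2$ argument, and has the advantage of being entirely self-contained: the paper's proof implicitly inherits the ``double zero characterizes optimality'' structure from the Beta case, which strictly speaking requires a continuity argument in the limit $\alpha\to0$, whereas your convexity bookkeeping proves the inequality $g\geq 0$ from scratch. The one step you should make explicit is the inequality $\sigma^2>\mu(1-\mu)$ for $\mu\neq\frac12$, which you invoke to place $0$ and $x_0$ in the two outer convex regions; for $\mu<\frac12$ it is equivalent to $\frac{1}{\mu}-\frac{1}{1-\mu}>2\ln\frac{1-\mu}{\mu}$, which holds because the difference of the two sides vanishes at $\mu=\frac12$ and has derivative $-\left(\frac{1}{\mu}-\frac{1}{1-\mu}\right)^2\leq0$. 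With that line added, the convex--concave--convex argument ($g\geq0$ on each outer piece by convexity and the flat tangencies at $0$ and $x_0$; $g\geq0$ on the middle piece because a concave function dominates its chord between two nonnegative endpoints) is airtight, and optimality follows since $g(x_0)=0$ forces any admissible proxy variance to be at least $2\psi(x_0)/x_0^2$.
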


\begin{proof}
In the limit $\alpha\to 0$ with $\frac{\alpha}{\alpha+\beta}$ fixed equal to $\mu$, the differential equation~\eqref{ODE3} simplifies into: 
\begin{multline*} 
u_{t,\mu}''(x)-u_{t,\mu}(x) = \exp\left(\mu x+\frac{x^2}{8}-\frac{x^2(2\mu-1)^2t}{8}\right)\\
\left(\frac{(2\mu-1)^2(1-t)}{4}+\frac{(2\mu-1)(1-t+4t\mu(1-\mu))}{4}x-\frac{(1-t+4t\mu(1-\mu))^2}{16}x^2\right)
\end{multline*}
with the Cauchy initial conditions $u_{t,\mu}(0)=0$ and $u'_{t,\mu}(0)=0$. The solution of this Cauchy problem is explicit and given by:
\begin{equation} \label{SolutionAlpha0} u_{t,\mu}(x)= \exp\left(\mu x+\frac{x^2}{8}-\frac{x^2(2\mu-1)^2t}{8}\right) -\mu \edr^{x}+\mu-1
\end{equation}
Therefore the optimal proxy variance is given by $\sigma_{\text{opt}}^2(\mu)=\frac{1}{4}-\frac{1}{4}x(2\mu-1)^2t_0$ where $t_0$ is determined by the system of equations: $u_{t_0,\mu}(x_0)=0$ and $u'_{t_0,\mu}(x_0)$, thus defining implicitly $t_0$ and $x_0$ as functions of $\mu$. In order to solve explicitly the last system of equations, we perform the change of variables: $(\mu,t)=\left(\frac{s}{s+1},\frac{2\td{t}}{s+1}+1\right)$ so that the solution \eqref{SolutionAlpha0} is now given by:
\begin{equation*} 
u_{\td{t},s}(x)=\exp\left(\frac{s}{s+1}x-\frac{\td{t}}{4(s+1)}x^2\right)-\frac{s}{s+1}\edr^{x}-\frac{1}{s+1}
\end{equation*}
Consequently, we have to solve the system:
\begin{equation*}
\left\{
\begin{array}{l}
 u_{\td{t}_0,s}(x_0)=0 \\  
 u'_{\td{t}_0,x}(x_0)=0
\end{array}
\right.
\,\,\Leftrightarrow \,\, \left\{\begin{array}{l}
 (s+1)\exp\left(\frac{s}{s+1}x_0-\frac{\td{t}_0}{4(s+1)}x_0^2\right)=s\edr^{x_0}+1 \\  
 \left(s-\frac{x_0\td{t}_0}{2}\right)\exp\left(\frac{s}{s+1}x_0-\frac{\td{t}_0}{4(s+1)}x_0^2\right)=s\edr^{x_0}
\end{array}
\right.
\end{equation*}
Introducing another change of variable $(x_0,y_0)=(x_0,x_0\td{t}_0)$, the last system is equivalent to:
\begin{equation*} 
\left\{\begin{array}{l}
 s-y_0=s(1+y_0)\edr^{x_0} \\  
 1+y_0=\exp\left(-\frac{x_0}{s+1}\left(s-\frac{y_0}{2}\right)\right)
\end{array}
\right. \,\,\,\Leftrightarrow  \left\{\begin{array}{l}
 s-y_0=s\,\exp\left(\frac{y_0+2}{y_0-2s}\ln(1+y_0)\right) \\  
 x_0=\frac{s+1}{\frac{y_0}{2}-s}\ln(1+y_0)
\end{array}
\right.
\end{equation*}
We now observe that $y_0=s-1$ and $x_0=-2\ln s$ is a solution of the former system. Performing back the various changes of variables, this is equivalent to say that $\td{t}_0=\frac{y_0}{x_0}= \frac{2(1-s)}{\ln s}$ so that $t_0=\frac{4(1-s)}{(s+1)\ln s}+1$ or equivalently $t_0=\frac{1}{(2\mu-1)^2}+\frac{2}{(2\mu-1)\ln\frac{1-\mu}{\mu}}$. Consequently, the optimal proxy variance is given by:
\begin{equation*} 
\sigma_{\text{opt}}^2(\mu)=\frac{1}{4}-\frac{1}{4}(2\mu-1)^2t_0=\frac{(1-2\mu)}{2\ln \frac{1-\mu}{\mu}}
\end{equation*}
which is precisely the optimal proxy variance of a Bernoulli random variable with mean $\mu$.
\end{proof}

\subsection{Optimal proxy variance for the Dirichlet distribution}

We start by reminding the definition of sub-Gaussian property for random vectors:

\begin{defi}[Sub-Gaussian vectors]\label{Def2}
A random $d$-dimensional vector $\boldsymbol{X}$ with finite  mean $\bdmu=\E[\boldsymbol{X}]$ is $\sigma^2${-sub-Gaussian} if the random variable $\boldsymbol{u}^\top \,\boldsymbol{X}$ is $\sigma^2$-{sub-Gaussian} for any unit vector $\boldsymbol{u}$ in the simplex $\mathcal{S}^{d-1}=\{\boldsymbol{u}\in [0,1]^d \,/\,\underset{i=1}{\overset{d}{\sum}}u_i=1\}$. This is equivalent to say that: 
\begin{align*}
\E[\exp(\bdlambda^\top (\boldsymbol{X}-\bdmu))]\le\exp\left(\frac{\| \bdlambda\|^2\sigma^2}{2}\right)\,\,\text{for all } \bdlambda\in\R^d.
\end{align*}
where $\| \bdlambda\|^2=\underset{i=1}{\overset{d}{\sum}} \lambda_i^2$. Eventually, a random vector $\boldsymbol{X}$ is said to be strictly sub-Gaussian, if the random variables $\boldsymbol{u}^\top \,\boldsymbol{X}$ are strictly sub-Gaussian for any unit vector $\boldsymbol{u}\in \mathcal{S}^{d-1}$.
\end{defi}
%

Let $d\geq 2$. The Dirichlet distribution $\Dir(\bdalpha)$, with positive parameters $\bdalpha=(\alpha_1,\ldots,\alpha_d)^\top$,  is characterized by a density on the simplex $\mathcal{S}^{d-1}$ given by:
\beqq 
f(x_1,\ldots,x_d;\alpha_1,\dots,\alpha_d) = \frac{1}{B(\bdalpha)} \prod_{i=1}^dx_i^{\alpha_i-1},
\eeqq
where $B(\boldsymbol{\alpha})=\frac{1}{\Gamma(\bar{\bdalpha})}\underset{i=1}{\overset{d}{\prod}} \Gamma(\alpha_i)$ and $\bar{\bdalpha}=\underset{i=1}{\overset{d}{\sum}}\alpha_i$.
It generalizes the Beta distribution in the sense that the components are $\text{Beta}$ distributed. More precisely, for any non-empty and strict subset $\mathcal{I}$ of $\{1,\ldots,d\}$:
\beqq \boldsymbol{X}=(X_1,\dots,X_d)^\top\sim \Dir(\bdalpha) \,\Longrightarrow\, \underset{i\in\mathcal{I}}\sum X_i\sim\text{Beta}\left(\underset{i\in\mathcal{I}}\sum \alpha_i,\underset{j\notin\mathcal{I}}\sum\alpha_j\right).
\eeqq
However, we remind the reader that the components $(X_i)_{1\leq i\leq d}$ are not independent and the variance/covariance matrix is given by:
\beqq \forall \,i\neq j\,: \, \text{Cov}[X_i,X_j]=-\frac{\alpha_i\alpha_j}{\bar{\bdalpha}^2(1+\bar{\bdalpha})}\,\text{ and }\,\forall\, 1\leq i\leq d\,\,,\,\, \text{Var}[X_i]=\frac{\alpha_i(\bar{\bdalpha}-\alpha_i)}{\bar{\bdalpha}^2(1+\bar{\bdalpha})}.
\eeqq
Eventually, if we define $\bdn=(n_1,\dots,n_d)^\top\in \mathbb{N}^d$, then the moments of the Dirichlet distribution are given by:
\beqq \E\left[\prod_{i=1}^d X_i^{n_i}\right]=\frac{B(\bdalpha+\bdn)}{B(\bdalpha)}.
\eeqq
This is equivalent to say that the moment-generating function of the Dirichlet distribution is:
\begin{align*}
	\E[\text{exp}(\bdlambda^\top\boldsymbol{X})]&=\sum_{m=0}^\infty\,\sum_{n_1+\dots+n_d=m}\frac{B(\bdalpha+\bdn)}{B(\bdalpha) }\frac{\lambda_1^{n_1}\dots \lambda_d^{n_d}}{(n_1)!\dots (n_d)!},\\
&=\sum_{m=0}^\infty\,\sum_{n_1+\dots+n_d=m}\frac{\lambda_1^{n_1}\dots \lambda_d^{n_d}}{(n_1)!\dots (n_d)!}\frac{\Gamma(\bar{\bdalpha})}{\Gamma(\bar{\bdalpha}+\bar{\bdn})}\prod_{i=1}^d \frac{\Gamma(\alpha_i+n_i)}{\Gamma(\alpha_i)}, \\
&=\sum_{m=0}^\infty\,\sum_{n_1+\dots+n_d=m}\frac{\lambda_1^{n_1}\dots \lambda_d^{n_d}}{(n_1)!\dots (n_d)!  }\frac{1}{\left(\bar{\bdalpha}\right)_{\bar{\bdn}}} \prod_{i=1}^d \left(\alpha_i\right)_{n_i},
\end{align*}
where we have defined $\bdlambda=(\lambda_1,\dots,\lambda_d)^\top\in \mathbb{R}^d$ and $\bar{\bdn}=\underset{i=1}{\overset{d}{\sum}}n_i.$

Let us define $\boldsymbol{e}_i$ the $i^{\text{th}}$ canonical vector of $\mathbb{R}^d$ and $\boldsymbol{X}=(X_1,\dots,X_d)^\top\sim\Dir(\bdalpha)$. From Definition~\ref{Def1} and the results regarding the $\text{Beta}(\alpha,\beta)$ distribution obtained in Section \ref{sec:distinct}, we immediately get that $\boldsymbol{e}_i^\top \, \boldsymbol{X}=X_i$ is $\sigma_i^2$-sub-Gaussian with $\sigma_i^2\overset{\text{def}}{=}\sigma_{\text{opt}}^2(\alpha_i,\bar{\bdalpha}-\alpha_i)$ defined from Theorem \ref{thm:1}. Moreover, in direction $\boldsymbol{e}_i$, $\sigma_i^2$ is the optimal proxy variance. Therefore, the remaining issue is to generalize these results for arbitrary unit vectors on $\mathcal{S}^{d-1}$. We obtain the following result:

\begin{thm}[Optimal proxy variance for the Dirichlet distribution]\label{thm:2}
For any parameter $\bdalpha$, the Dirichlet distribution $\Dir(\bdalpha)$ is sub-Gaussian with optimal proxy variance $\sigma_{\text{opt}}^2(\bdalpha)$ given from Theorem~\ref{thm:1} and:
\beqq \sigma_{\text{opt}}^2(\bdalpha)=\sigma_{\text{opt}}^2(\alpha_{\text{max}},\bar{\bdalpha}-\alpha_{\text{max}})
\quad\text{ where } \quad \alpha_{\text{max}} = \max_{1\leq i\leq d}\{\alpha_i\}.
\eeqq
\end{thm}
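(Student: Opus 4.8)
The plan is to collapse the $d$-dimensional problem onto the one-dimensional (Beta) result of Theorem~\ref{thm:1} in two moves: first show that the worst direction over the simplex is necessarily a vertex, and then identify that vertex as $\boldsymbol{e}_{i^*}$ with $\alpha_{i^*}=\alpha_{\max}$. By Definition~\ref{Def2}, $\Dir(\bdalpha)$ is $\sigma^2$-sub-Gaussian precisely when $\boldsymbol{u}^\top\boldsymbol{X}$ is $\sigma^2$-sub-Gaussian for every $\boldsymbol{u}\in\mathcal{S}^{d-1}$, so the optimal proxy variance is $\sigma_{\text{opt}}^2(\bdalpha)=\sup_{\boldsymbol{u}\in\mathcal{S}^{d-1}}\sigma_{\text{opt}}^2(\boldsymbol{u}^\top\boldsymbol{X})$, the supremum of the scalar optimal proxy variances of the bounded random variables $\boldsymbol{u}^\top\boldsymbol{X}=\sum_i u_i X_i$. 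Since $\boldsymbol{e}_{i^*}^\top\boldsymbol{X}=X_{i^*}\sim\Beta(\alpha_{\max},\bar{\bdalpha}-\alpha_{\max})$, the vertex $\boldsymbol{e}_{i^*}$ already furnishes the lower bound $\sigma_{\text{opt}}^2(\bdalpha)\ge\sigma_{\text{opt}}^2(\alpha_{\max},\bar{\bdalpha}-\alpha_{\max})$ via Theorem~\ref{thm:1}, so the whole content lies in the reverse inequality.

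First I would prove that $\boldsymbol{u}\mapsto\sigma_{\text{opt}}^2(\boldsymbol{u}^\top\boldsymbol{X})$ is convex on $\R^d$. Writing $\phi(\bdlambda)=\ln\E[\exp(\bdlambda^\top(\boldsymbol{X}-\bdmu))]$ for the centered cumulant-generating function of $\boldsymbol{X}$, which is finite and convex because $\boldsymbol{X}$ is bounded, the scalar definition~\eqref{eq:def} yields the variational formula $\sigma_{\text{opt}}^2(\boldsymbol{u}^\top\boldsymbol{X})=\sup_{\lambda\neq0}\frac{2}{\lambda^2}\phi(\lambda\boldsymbol{u})$. For each fixed $\lambda$ the map $\boldsymbol{u}\mapsto\frac{2}{\lambda^2}\phi(\lambda\boldsymbol{u})$ is convex, being a positive multiple of $\phi$ precomposed with a linear map, and a pointwise supremum of convex functions is convex. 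Since a convex function on the compact polytope $\mathcal{S}^{d-1}$ attains its maximum at an extreme point, we obtain $\sup_{\boldsymbol{u}\in\mathcal{S}^{d-1}}\sigma_{\text{opt}}^2(\boldsymbol{u}^\top\boldsymbol{X})=\max_{1\le i\le d}\sigma_{\text{opt}}^2(\alpha_i,\bar{\bdalpha}-\alpha_i)$. This reduces the theorem to a comparison among the $d$ marginal Beta proxy variances.

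Second I would identify the maximizing index. An elementary observation is that $\alpha_{\max}$ is the parameter closest to $\bar{\bdalpha}/2$: at most one $\alpha_i$ can exceed $\bar{\bdalpha}/2$ (two would sum to more than $\bar{\bdalpha}$), and each $\alpha_j$ with $j\neq i^*$ satisfies $\alpha_j\le\bar{\bdalpha}-\alpha_{\max}$ as a single summand of $\bar{\bdalpha}-\alpha_{\max}$, so $|\alpha_j-\bar{\bdalpha}/2|\ge|\alpha_{\max}-\bar{\bdalpha}/2|$ in all cases. It then remains to invoke the monotonicity lemma: for fixed sum $S=\bar{\bdalpha}$, the map $\alpha\mapsto\sigma_{\text{opt}}^2(\alpha,S-\alpha)$ is symmetric about $\alpha=S/2$ and unimodal, increasing on $(0,S/2]$. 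Combined with the previous step and the symmetry $\sigma_{\text{opt}}^2(\alpha,\beta)=\sigma_{\text{opt}}^2(\beta,\alpha)$ of Theorem~\ref{thm:1}, this gives $\max_i\sigma_{\text{opt}}^2(\alpha_i,S-\alpha_i)=\sigma_{\text{opt}}^2(\alpha_{\max},S-\alpha_{\max})$ and finishes the proof.

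The main obstacle is the monotonicity lemma, the only genuinely analytic ingredient; note that the simple upper bound $\frac{1}{4(S+1)}$ of Theorem~\ref{thm:1} does not separate the vertices, so a finer argument is required. I would attack it through the transcendental characterization~\eqref{SystemEq}: writing $\sigma_{\text{opt}}^2(\alpha,S-\alpha)=\sup_{x}\frac{2}{x^2}\phi_\alpha(x)$ with $\phi_\alpha(x)=\ln{}_1F_1(\alpha;S;x)-\frac{\alpha}{S}x$ and $x_0(\alpha)$ the optimizer, the envelope theorem gives $\frac{d}{d\alpha}\sigma_{\text{opt}}^2(\alpha,S-\alpha)=\frac{2}{x_0^2}\,\partial_\alpha\phi_\alpha(x_0)$, because the derivative along $x$ vanishes at the critical point $x_0$. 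The delicate part is then to pin down the sign of $\partial_\alpha\phi_\alpha$ at $x_0$, which involves the logarithmic derivative of ${}_1F_1$ in its first parameter; an alternative route would be a direct sub-Gaussian domination between two Beta laws with the same total $S$, but in either form this monotonicity is where the real work lies.
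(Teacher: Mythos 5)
Your reduction to the vertices of the simplex is a genuinely different route from the paper's. The paper never invokes convexity of $\boldsymbol{u}\mapsto\sigma_{\text{opt}}^2(\boldsymbol{u}^\top\boldsymbol{X})$: it expands the moment-generating function in Pochhammer symbols, proves the coefficient-wise inequality $\prod_{i=1}^d(\bar{\bdalpha})_{n_i}\leq(\bar{\bdalpha})_{\bar{\bdn}}$, deduces the factorization bound $\E[\exp(\bdlambda^\top(\boldsymbol{X}-\bdmu))]\leq\prod_{i=1}^d\E[\exp(\lambda_i(X_i-\mu_i))]$ (a negative-dependence statement for Dirichlet coordinates), and then bounds each factor by $\exp(\lambda_i^2\sigma_{\max}^2/2)$ using the one-dimensional Theorem~\ref{thm:1}. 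Your variational formula $\sigma_{\text{opt}}^2(\boldsymbol{u}^\top\boldsymbol{X})=\sup_{\lambda\neq0}\frac{2}{\lambda^2}\phi(\lambda\boldsymbol{u})$ and the extreme-point argument are correct and arguably cleaner for the directional formulation; what the paper's factorization buys in exchange is the bound $\exp(\|\bdlambda\|^2\sigma_{\max}^2/2)$ for a general vector $\bdlambda$ (the second clause of Definition~\ref{Def2}), which your convexity argument does not reach since the Euclidean sphere, unlike the simplex, has no finite set of extreme points. Both approaches give the same lower bound via the direction $\boldsymbol{e}_{i^*}$.

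There is, however, one genuine gap, which you yourself flag: the monotonicity lemma that for fixed $S=\bar{\bdalpha}$ the map $\alpha\mapsto\sigma_{\text{opt}}^2(\alpha,S-\alpha)$ is increasing on $(0,S/2]$. Without it, both your argument and the paper's only yield $\sigma_{\text{opt}}^2(\bdalpha)=\max_{1\leq i\leq d}\sigma_{\text{opt}}^2(\alpha_i,\bar{\bdalpha}-\alpha_i)$, not the stated identification of the maximizer as $\alpha_{\max}$; your observation that $\alpha_{\max}$ is the parameter closest to $\bar{\bdalpha}/2$ is correct and matches the paper's two-case check, but it is useless without the monotonicity. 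You should know that the paper does not prove this lemma either: it simply asserts that $\sigma_{\text{opt}}^2(\alpha_i,\bar{\bdalpha}-\alpha_i)$ is maximal when $|(\bar{\bdalpha}-\alpha_i)-\alpha_i|$ is minimal, implicitly appealing to the unimodal behavior noted after Theorem~\ref{thm:1}. So you have not missed an ingredient that the paper supplies, but if you want a self-contained proof you must actually carry out the envelope-theorem computation (controlling the sign of $\partial_\alpha\ln{}_1F_1(\alpha;S;x_0)-x_0/S$) or the sub-Gaussian domination between two Beta laws with the same total mass that you mention; as written, the hardest step of your proof is a declared intention rather than an argument.
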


\begin{proof}
We first observe that the computations of $\sigma^2_{\text{opt}}(\alpha_i,\beta_i=\bar{\bdalpha}-\alpha_i)$ correspond to cases where the sum $\alpha_i+\beta_i$ is fixed to $\bar{\bdalpha}$ and thus independent of $i$. Therefore, 
$\sigma^2_{\text{opt}}(\alpha_i,\bar{\bdalpha}-\alpha_i)$ is maximal when $|(\bar{\bdalpha}-\alpha_i)-\alpha_i|$ is minimal, i.e. when the distance from $\alpha_i$ to $\frac{1}{2}\bar{\bdalpha}$ is minimal. It is easy to see that this corresponds to choosing $\alpha_i = \alpha_{\text{max}} = \max\{\alpha_i\,\,,\,\, 1\leq i\leq d\}$  (by looking at the two possible cases $\alpha_{\text{max}}\leq \frac{1}{2}\bar{\bdalpha}$ and $\alpha_{\text{max}} > \frac{1}{2}\bar{\bdalpha}$).

We then observe that $\sigma^2_{\text{max}}$ cannot be improved. Indeed, let us denote $i_0$ one of the components for which the maximum is obtained. Then, if we take $\boldsymbol{u}=\boldsymbol{e}_{i_0}$, the discussion presented above shows that $\sigma_{i_0}^2=\sigma^2_{\text{max}}$ is the optimal proxy variance in this direction. Hence the optimal proxy variance cannot be lower than $\sigma^2_{\text{max}}$.\newline
Let us now prove that $\boldsymbol{X}$ is $\sigma^2_{\text{max}}$-sub-Gaussian. Let $\boldsymbol{u}=(u_1,\dots,u_d)^\top$ be a unit vector on $\mathcal{S}^{d-1}$ and $\lambda\in \mathbb{R}$. We define for clarity $\bdlambda=\lambda \boldsymbol{u}$. We have:
\begin{multline}
	\label{ExpectationProduct}\E\left[\exp\left(\lambda \boldsymbol{u}^\top \boldsymbol{X}\right)\right]=\E\left[\exp\left(\bdlambda^\top \boldsymbol{X}\right)\right]=\E\left[\exp\left(\sum_{i=1}^d\lambda_i X_i\right)\right],\\
=\sum_{m=0}^\infty\,\sum_{n_1+\dots+n_d=m}\frac{\lambda_1^{n_1}\dots \lambda_d^{n_d}}{(n_1)!\dots (n_d)!  }\frac{1}{\left(\bar{\bdalpha}\right)_{\bar{\bdn}}} \prod_{i=1}^d \left(\alpha_i\right)_{n_i}.
\end{multline}
Note that we also have:
\begin{align}
	\prod_{i=1}^d \E\left[\exp(\lambda_i X_i)\right]&=\prod_{i=1}^d \left(\sum_{j=0}^\infty \frac{(\alpha_i)_j}{(j!) (\bar{\bdalpha})_j}\lambda_i^j\right)\nonumber\\
&=\sum_{m=0}^\infty\,\sum_{n_1+\dots+n_d=m}\frac{\lambda_1^{n_1}\dots \lambda_d^{n_d}}{(n_1)!\dots (n_d)!  }\prod_{i=1}^d\frac{(\alpha_i)_{n_i}}{(\bar{\bdalpha})_{n_i}}.\label{ProductExpectation} 
\end{align}
Moreover we have the inequality:
\beqq 
\prod_{i=1}^d (\bar{\bdalpha})_{n_i}\leq (\bar{\bdalpha})_{\bar{\bdn}}, \eeqq
because both sides have the same number of terms in the product (i.e. $\bar{\bdn}$) but those of the right hand side are always greater or equal to those of the left hand side. Hence, from \eqref{ExpectationProduct} and \eqref{ProductExpectation}, we find:
\begin{equation*}
\E\left[\exp\left(\bdlambda^\top (\boldsymbol{X}-\bdmu)\right)\right]\leq \prod_{i=1}^d \E\left[\exp(\lambda_i (X_i-\mu_i))\right].
\end{equation*}
Using the optimal proxy variance of the Beta distribution proven in Theorem~\ref{thm:1}, we find:
\begin{multline*}
	\E\left[\exp\left(\bdlambda^\top (\boldsymbol{X}-\bdmu)\right)\right]\leq\prod_{i=1}^d \E\left[\exp(\lambda_i (X_i-\mu_i))\right]\leq\prod_{i=1}^d \exp\left(\frac{\lambda_i^2\sigma_i^2}{2}\right)\\
\leq\prod_{i=1}^d \exp\left(\frac{\lambda_i^2\sigma^2_{\text{max}}}{2}\right)=\exp\left(\frac{\sigma^2_{\text{max}} \|\bdlambda\|^2}{2}\right),
\end{multline*}
thus showing that $\boldsymbol{X}$ is $\sigma^2_{\text{max}}$-sub-Gaussian and concluding the proof.
\end{proof}

Note that using Theorem \ref{thm:2}, we obtain the following corollary:
\begin{cor}\label{cor2}
For any integer $d\geq 2$, the Dirichlet distribution $\Dir(\alpha_1,\dots,\alpha_d)$ is strictly sub-Gaussian if and only if $d=2$ and $\alpha_1=\alpha_2$.
\end{cor}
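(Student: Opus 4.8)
The plan is to reduce the whole statement to the scalar Beta dichotomy of Corollary~\ref{cor} ($\Beta(\alpha,\beta)$ strictly sub-Gaussian $\iff \alpha=\beta$), together with one elementary observation: strict sub-Gaussianity is preserved under affine maps. I would first record this invariance. If $X$ is $\sigma^2$-sub-Gaussian, then substituting $\lambda\mapsto a\lambda$ in Definition~\ref{Def1} shows $aX+b$ is $a^2\sigma^2$-sub-Gaussian, so $\sigma_{\text{opt}}^2(aX+b)=a^2\sigma_{\text{opt}}^2(X)$ while $\Var[aX+b]=a^2\Var[X]$. Hence, for $a\neq0$, $aX+b$ is strictly sub-Gaussian if and only if $X$ is (the case $a=0$ being a degenerate constant, trivially strictly sub-Gaussian).

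For the ``only if'' direction I would test only the canonical directions. If $\boldsymbol{X}\sim\Dir(\bdalpha)$ is strictly sub-Gaussian, then in particular $\boldsymbol{e}_i^\top\boldsymbol{X}=X_i\sim\Beta(\alpha_i,\bar{\bdalpha}-\alpha_i)$ must be strictly sub-Gaussian for every $i$. By Corollary~\ref{cor} this forces $\alpha_i=\bar{\bdalpha}-\alpha_i$, i.e. $\alpha_i=\bar{\bdalpha}/2$, for each $i$. Summing over $i$ gives $\bar{\bdalpha}=\sum_{i=1}^d\alpha_i=d\,\bar{\bdalpha}/2$, whence $d=2$, and then $\alpha_1=\alpha_2=\bar{\bdalpha}/2$. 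So no non-canonical direction is even needed here, and the full strength of Theorem~\ref{thm:2} (the identification of the worst direction through $\alpha_{\max}$) is not required for this implication.

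For the ``if'' direction, suppose $d=2$ and $\alpha_1=\alpha_2=:\alpha$. Since $\boldsymbol{X}$ is supported on the affine line $\{x_1+x_2=1\}$, every direction collapses to a single Beta variable: for a unit vector $\boldsymbol{u}=(u_1,u_2)$ one has $\boldsymbol{u}^\top\boldsymbol{X}=u_1X_1+u_2(1-X_1)=u_2+(u_1-u_2)X_1$ with $X_1\sim\Beta(\alpha,\alpha)$. By Corollary~\ref{cor}, $\Beta(\alpha,\alpha)$ is strictly sub-Gaussian, and by the affine invariance above so is $\boldsymbol{u}^\top\boldsymbol{X}$ for every $\boldsymbol{u}$; Definition~\ref{Def2} then yields that $\boldsymbol{X}$ is strictly sub-Gaussian.

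The main (and essentially only) subtlety I anticipate is the ``if'' direction, where strict sub-Gaussianity must hold \emph{simultaneously} along all directions, not just the coordinate axes. What makes this tractable is precisely the simplex constraint $X_1+X_2=1$, which reduces each linear functional $\boldsymbol{u}^\top\boldsymbol{X}$ to an affine function of one $\Beta(\alpha,\alpha)$ variable; the affine-invariance lemma then transports the scalar result of Corollary~\ref{cor} to all directions at once. Everything else is bookkeeping via the marginal identity $X_i\sim\Beta(\alpha_i,\bar{\bdalpha}-\alpha_i)$ established earlier in this section.
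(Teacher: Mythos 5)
Your proof is correct and follows essentially the same route as the paper's: test the coordinate directions, use the marginal identity $X_i\sim\Beta(\alpha_i,\bar{\bdalpha}-\alpha_i)$, and invoke the Beta dichotomy of Corollary~\ref{cor}. If anything, your version is slightly more complete than the paper's one-line argument: you derive $d=2$ cleanly by summing $\alpha_i=\bar{\bdalpha}/2$ over $i$, and you explicitly verify the ``if'' direction along \emph{all} directions (not just the canonical ones) via the constraint $X_1+X_2=1$ together with affine invariance of strict sub-Gaussianity, a step the paper leaves implicit.
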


Indeed, we first need to require $\alpha_1=\dots=\alpha_d\overset{\text{def}}{=}\alpha$ so that all directions have the same optimal proxy variance. Then, each component satisfies $X_i=\boldsymbol{e}_i^\top \boldsymbol{X} \sim \text{Beta}(\alpha, (d-1)\alpha)$ and Theorem~\ref{thm:1} shows that $\sigma_i^2$ is the optimal proxy variance for $X_i$ if and only if $\alpha=(d-1)\alpha$, i.e. if and only if $d=2$.

\section*{Acknowledgments}

We wish to thank St\'ephane Boucheron  for an enlightening discussion of our results and an anonymous referee for insightful suggestions.  
O.M. would like to thank Universit\'e de Lyon, Universit\'e Jean Monnet and Institut Camille Jordan for financial support. This work was supported by the LABEX MILYON (ANR-10-LABX-0070) of Universit\'e de Lyon, within the program ``Investissements d'Avenir'' (ANR-11-IDEX-0007) operated by the French National Research Agency (ANR).
J.A. would like to thank Inria Grenoble Rh\^one-Alpes and 
Laboratoire Jean Kuntzmann, Universit\'e Grenoble Alpes for financial support.  J.A. is also member of Laboratoire de Statistique, CREST, Paris. This work was partially conducted during a scholar visit of J.A. at the Department of Statistics \& Data Science of the University of Texas at Austin.

\bibliographystyle{apalike}
\bibliography{biblio}

\end{document}